\newtheorem{Def}{Definition} 
\newtheorem{Th}{Theorem}
\newtheorem{Co}{Corollary}
\newcommand\reallywidehat[1]{%
\savestack{\tmpbox}{\stretchto{%
  \scaleto{%
    \scalerel*[\widthof{\ensuremath{#1}}]{\kern-.6pt\bigwedge\kern-.6pt}%
    {\rule[-\textheight/2]{1ex}{\textheight}}%WIDTH-LIMITED BIG WEDGE
  }{\textheight}% 
}{0.5ex}}%
\stackon[1pt]{#1}{\tmpbox}%
}
\title{Provable Constrained Stochastic Convex Optimization \\
with XOR-Projected Gradient Descent}
\author[1]{\href{mailto:<ding274@purdue.edu>?Subject=Your UAI 2022 paper}{Fan Ding}{}}
\author[2]{Yijie Wang}
\author[3]{Jianzhu Ma}
\author[1]{Yexiang Xue}
\affil[1]{%
    Computer Science Dept.\\
    Purdue University\\
    West Lafayette, Indiana, USA
}
\affil[2]{%
    Computer Science Dept.\\
    Indiana University\\
    Bloomington, Indiana, USA
  }
\affil[3]{%
    Institute for Artificial Intelligence \\
    Peking University \\
    Beijing, China
}
\begin{document}
\maketitle

\begin{abstract}
  
% Many decision-making problems under uncertainty can be 
% formulated as convex stochastic optimization,  which minimizes a convex objective in expectation across exponentially many probabilistic scenarios. 
% %
% Despite its convexity, evaluating the objective function is \#P-hard. 
% %
% Previous approaches use samples from MCMC and its variants to approximate the objective function but have a slow mixing rate. 
% %
% We present XOR-SGD, a stochastic gradient descent (SGD) approach guaranteed to converge to solutions that are at most a constant away from the true optimum in linear number of iterations. 
% %
% XOR-SGD harnesses XOR-sampling, which reduces the sample approximation of the expectation into queries of NP oracles via hashing and projection.
% %
% We evaluate XOR-SGD on two real-world applications.
% %
% The first stochastic inventory management problem  searches for 
% a robust inventory management plan in preparation for the  virus pandemics, natural disasters, etc. 
% %
% The second network design problem decides an optimal land conservation plan which promotes the free movement of wild-life animals. 
% %
% We show that our approach finds better solutions with drastically fewer samples needed compared to a couple of  state-of-the-art solvers. % for the stochastic optimization problem. 

Provably solving stochastic convex optimization problems with constraints is essential for various problems in science, business, and statistics.
Recently proposed XOR-Stochastic Gradient Descent (XOR-SGD) provides a  convergence rate guarantee solving the constraints-free version of the problem by leveraging XOR-Sampling.
However, the task becomes more difficult when additional equality and inequality constraints are needed to be satisfied. 
Here we propose XOR-PGD, a novel algorithm based on Projected Gradient Descent (PGD) coupled with the XOR sampler, which is guaranteed to solve the constrained stochastic convex optimization problem still in linear convergence rate by choosing proper step size.
We show on both synthetic stochastic inventory management and real-world road network design problems that the rate of constraints satisfaction of the solutions optimized by XOR-PGD is $10\%$ more than the competing approaches in very large searching space. The improved XOR-PGD algorithm is demonstrated to be more accurate and efficient than both XOR-SGD and SGD coupled with MCMC based samplers. It is also shown to be more scalable with respect to the number of samples and processor cores via experiments with large dimensions.

\end{abstract}

\section{INTRODUCTION}\label{sec:intro}
%Coming soon.

%General outline: 
%\begin{itemize}
%    \item Convex stochastic optimization problems. What are they?
%    \item Why are they important?
%    \item Why are they challenging?
%    \item What are the applications of convex stochastic optimization problems?
%    \item Recent development in model counting with hashing and randomization.
%    \item We propose what algorithm? Obtain a constant approximation guarantee. 
%    \item We apply our algorithm in inventory management and network design. We show our algorithm outperforms existing approaches. 
%\end{itemize}

% \fan{Intro need to be modified. Everything else done.}

Stochastic Convex Optimization problem is of great importance given its wide applicability in finance, control, robotics, management science, operational research, ecology, and conservation \citep{sodomka2007efficient,ziukov2016literature,gomes2019computational}. Advancements made to address this problem have ramifications in many domains. Mathematically, it minimizes an objective in expectation across multiple probabilistic scenarios under uncertainty.
\begin{align}
\begin{split}
    & \min_x~~\mathbb{E}_{\theta\sim Pr(\theta)}~~ f(x, \theta),  \\
    & s.t. \ \ x\in C:=\left \{\forall i,~ h_i(x)=0; ~ \forall j, ~ g_j(x)\leq 0\right \}.
    \label{eq:sto}
\end{split}
\end{align}
where each $f(x, \theta)$ is a a convex function with respect to $x$, $C$ is the set of constraints, $g_j(x)$ are convex functions and $h_i(x)$ are linear. 
Variable $\theta$ is sampled from distribution $Pr(\theta)$, which is represented as a Markov random field (MRF) in this paper.
We can see that this problem is highly intractable ($\#$P-hard) \citep{fan2021xorsgd}.
Despite the intractability of computing the expectation over a general probability distribution, a common operator in probabilistic inference, the problem is harder to solve because of the existence of the constraints set $C$. A recently proposed method, XOR-SGD \citep{fan2021xorsgd} first harness XOR-Sampler \citep{Ermon13Wish} to solve the constraints-free version of this task with guaranteed convergence rate using Stochastic Gradient Descent (SGD). However, they cannot provide such a bound once the solutions are in a constrained space which is more common in the real world. In this paper we consider the setting of the constrained stochastic convex optimization and first provide an algorithm, XOR-Projected Gradient Descent (XOR-PGD), that has a provably convergence rate towards the optimal value. 
The key of XOR-PGD is to first draw a representative set of samples from $Pr(\theta)$
which yield an accurate estimation of the gradient direction, and then adjust the estimation by projecting the variable $x$ to the constraints space $C$. Notice that even if PGD is a common method in traditional constrained optimization, it is not trivial to extend it to a stochastic setting where unbiased estimation is unavailable.

Like XOR-SGD, our XOR-PGD also leverages XOR-Sampling with a constant approximation guarantee,
which reduces the sampling problem into queries of NP oracles via 
hashing and projection.
Our key contribution is the extension of classical convergence analysis of 
XOR-SGD on constrained stochastic convex optimization problems, where we show that a constant multiplicative bound 
on the expectation of the gradient direction is sufficient to bound the 
final result against the true optimum.
Our theoretic contribution also does not depend on the unbiasness of the gradients, which was a necessary condition in previous analysis. 

On learning probabilistic graphical models, stochastic optimization is related to  \textit{the Marginal Maximum-a-posterior (MMAP) problem} \citep{Xue2016MarginalMAP,LiuI13VariationalMMAP,Radu14AndOrMMAP,Maua2012AnytimeMAP,Marinescu2015AOBB,domke2013learning}. These problems can be formulated as (albeit non-convex) stochastic optimization problems. 
\textit{Convergence analysis of gradient descent} has been studied for objectives with or without constraints \citep{wang2013variance,dubey2016variance,agarwal2017finding,lee2015distributed,ruder2016overview,jin2017accelerated,ge2015escaping,duchi2011adaptive,hinton2012neural,kingma2014adam,duchi2018minimax,allen2017katyusha,allen2018natasha}. The constrained setting is related to a convex-concave saddle point minimax optimization problem \citep{mokhtari2020convergence,xie2020lower,wang2020improved} where primal-dual methods are often used \citep{du2019linear,hamedani2018primal}. A fruitful line of Stochastic alternating direction method of multipliers (ADMM) \citep{ouyang2013stochastic,zheng2016fast,liu2017accelerated} has been proposed to deal with constrained optimization under uncertainty, yet this uncertainty is represented by a uniform distribution where the unbiased estimation is available.
Similar idea of stochastic convex optimization is also proposed by \cite{fan2021xorcd} for machine learning, where they leverage a XOR sampler to estimate the partition function in the learning process of an energy-based model.
However, none of them offer a theoretical guarantee of convergence rate in the setting of constrained stochastic convex optimization.

Experimental results reveal that XOR-PGD is effective in optimizing constrained convex stochastic functions. 
XOR-PGD outperforms competing 
solvers XOR-SGD and those running SGD with either MCMC, BP or BPChain samplers
on both the constrained stochastic inventory management 
and the constrained stochastic network design problems on real-world data under various conditions in not only accuracy and speed, but also the rate of constraints satisfaction.
In particular,  \textbf{90\% solutions obtained by XOR-PGD satisfy the constraints set $C$ even when the searching space is very large in the stochastic inventory management problem, approximately 10\% more that of competing methods. Besides, the improved XOR-PGD algorithm converges faster than XOR-SGD by accessing 20 less XOR samples in each iteration and is able to find better solutions in the stochastic network design problem.}
See the experiments section for more details.

\noindent\textbf{Notations}~~  For function $f:\mathbb{R}^d\rightarrow \mathbb{R}$, we call it $L$-smooth if 
% the gradient of $f$ is Lipschitz continuous with parameter $L$, namely
%  $||\nabla f(y)-\nabla f(x)||\leq L||y-x||$, for all $x,y\in dom~f$.
% $L$-smoothness also implies that
for all $x,y$ in the convex domain $dom~f$,
 $   f(y) \leq f(x)+\nabla f(x)^T (y-x) +\frac{L}{2}||y-x||^2$. 
%Besides, we call $f$ $\alpha$-strongly convex if $f(y)\geq f(x)+\nabla f(x)^T(y-x)+\frac{\alpha}{2}||x-y||^2$ for all $x,y\in\mathbb{R}^d.$ 
Denote $f^+(x)$ as the positive part of function $f(x)$. In other words, $f^+(x) = \max\{f(x), 0\}$. 
$f^-(x)$ is defined similarly. 
%
% It is easy to see that $f(x) = f^+(x)+ f^-(x)$ for all $x$, and either $f^+(x)$ or $f^-(x)$ is zero for any given $x$. 
% %
% Let $g=(g_1, \dots, g_n)^T$ be a vector. $g^+$ ($g^-$) are also vectors, each entry of which is the positive (negative) part of the corresponding entry of $g$. 
%
% For two vectors $f$ and $g$, $f \leq g$ means each entry of $f$ is not greater than the corresponding entry of $g$.
For a random vector $x$, we define $\mathbb{E}[x]$ as the element-wise expectation and the total variation $Var(x)=\mathbb{E}[||x||_2^2]-||\mathbb{E}[x]||_2^2$ where $||\cdot||_2^2$ is the square of $l_2$ norm.

%In other words, we define two functions $g_k^+=\max\{g_k,\mathbf{0}\}$ and $g_k^-=\min\{g_k,\mathbf{0}\}$ where $\mathbf{0}$ is a vector of all 0 which has the same dimension as $g_k$. Thus we have $g_k=g_k^+ + g_k^-$. We define both $\nabla f(x_k)^+$ and $\nabla f(x_k)^-$ in the similar way. Then from Theorem \ref{Th:bound} we could obtain $\frac{1}{c}\nabla f(x_k)^+ \leq \mathbb{E}[g_k^+]\leq c\nabla f(x_k)^+$ and $c\nabla f(x_k)^- \leq \mathbb{E}[g_k^-]\leq \frac{1}{c}\nabla f(x_k)^-$, where $"\leq"$ for vectors is an element-wise operator. 

\section{PRELIMINARIES}\label{sec:prelim}
%In this section, we first briefly introduce the probabilistic models $Pr(\theta)$ we used, namely Markov Random Field (MRF) models to represent the uncertain probabilistic scenarios. Then, we review the brief idea of XOR-Sampling to obtain samples from the MRF. %Readers familiar with these parts can skip to the next section.

% \noindent\textbf{Probabilistic Models}~~
\subsection{Probabilistic Distribution}
The probability distribution $Pr(\theta)$ can be defined in various different forms.
We consider $Pr(\theta)$ as a graphical model specified as a factor graph with $N=|V|$ discrete random variables $\theta_i\in \Theta_i, i \in V$ where $\Theta_i=\{0,1\}$. The global random vector $\theta=[\theta_1,\theta_2,\ldots,\theta_N]$ takes value in the cartesian product $\Theta = \Theta_1 \times \Theta_2 \times \cdots \times \Theta_N$. We consider a function over $\theta \in \Theta$ as follows:
\begin{align}\label{eq:mrf}
f(\theta)=\prod_{\alpha\in \mathcal{I}} \phi_{\alpha}(\{\theta\}_{\alpha})
\end{align}
which factors into potentials $\phi_{\alpha}:\{\theta\}_{\alpha}\rightarrow \mathbb{R}^{+}$, where $\mathcal{I}$ is the set of all the cliques of the graph, $\{\theta\}_{\alpha}\subseteq V$ is a subset of variables that the factor $\phi_{\alpha}$ depends on. We consider a normalized distribution $p(\theta) = \frac{1}{Z}f(\theta)$ where $Z$, the normalization constant, also known as the partition function, is defined as $Z=\sum_{\theta} f(\theta)$. The structure of $Pr(\theta)$ or the set $\mathcal{I}$ can be built from domain knowledge and potential functions can be learned from real-world data. The focus of this paper is not on how to construct the MRF but is how we solve problem in Equation \ref{eq:sto} in general when $Pr(\theta)$ is given in the form shown in Equation \ref{eq:mrf}. 

% \noindent\textbf{XOR-Sampling}~~
\subsection{XOR-SGD}
XOR-SGD outperforms SGD with MCMC based samplers in that it has a constant bound on the probability of drawn samples and further guarantee a constant bound between the expectation of the distribution formed by samples and the expectation of true distribution, without requiring exponentially large number of samples.
These samples are drawn by XOR-Sampling \citep{ermon2013embed}, a recently
proposed sampling scheme with a constant approximation guarantee, 
which reduces the sampling problem into queries of NP oracles via 
hashing and projection. Indeed, XOR-SGD requires accessing NP-oracle queries at each iteration.
Specifically, XOR-SGD is guaranteed to converge to a solution that is within a vanishing constant away from the true optimum in linear number of iterations, which is shown as Theorem \ref{Th:XOR-SGD}.

\begin{Th}\label{Th:XOR-SGD}\citep{fan2021xorsgd}
Let $\rho,\kappa$ be the constant approximation factor as in \cite{fan2021xorsgd}, function $f(x,\theta): \mathbb{R}^d\times\{0,1\}^n\rightarrow \mathbb{R}$ be a $L$-smooth convex function w.r.t. $x$. Denote $OPT=\min_x \mathbb{E}_{\theta\sim Pr(\theta)}f(x,\theta)$ as the global optimum. 
%, and $x_{k+1}=x_{k}-t\overline{g_k}$ where $\overline{g_k}$ is the mean of samples $g_k^i=\nabla f(x_k,\theta_i)$. 
Let $\sigma^2=\max_x\{Var(\nabla_x f(x,\theta))\}$ and $\varepsilon^2=\max_x\{||\mathbb{E}[\nabla_x f(x, \theta)]||_2^2\}$. 
For any $1\leq\rho\kappa\leq\sqrt{2}$, step size $t\leq \frac{2-\rho^2\kappa^2}{L\rho\kappa}$ and sample size $N\geq1$, $\overline{x_K}$ is the output of XOR-SGD and $\mbox{obj} = \mathbb{E}_{\theta}[f(\overline{x_K}, \theta)]$ is the objective function value at $\overline{x_K}$. We have: 
\begin{align}\label{eq:main}
    \mathbb{E}_{\overline{x_K}}[\mbox{obj}] - OPT
     &\leq\frac{\rho\kappa||x_0-x^*||_2^2}{2tK}+\frac{t(\sigma^2+\varepsilon^2)}{N}.
    %+\frac{t\sigma^2}{N}+\frac{t\varepsilon^2}{N}\Big{(}1-\frac{1}{\rho^3\kappa^3}\Big{)}
\end{align}
\end{Th}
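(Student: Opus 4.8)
The plan is to adapt the classical averaged-iterate convergence analysis of stochastic gradient descent for smooth convex objectives to the biased-gradient regime created by XOR-Sampling. Write $F(x)=\mathbb{E}_{\theta\sim Pr(\theta)}[f(x,\theta)]$, let $x^*=\arg\min_x F(x)$, let $g_k=\frac{1}{N}\sum_{i=1}^{N}\nabla_x f(x_k,\theta_i)$ be the gradient estimate assembled from the $N$ XOR samples drawn at iteration $k$, and let the update be $x_{k+1}=x_k-t\,g_k$. Since the reported output $\overline{x_K}$ is the running average of the iterates, by convexity of $F$ and Jensen's inequality it suffices to control $\frac{1}{K}\sum_{k=0}^{K-1}\left(F(x_k)-OPT\right)$; the final outer expectation over the sampler's randomness then produces $\mathbb{E}_{\overline{x_K}}[\mbox{obj}]$.

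First I would expand the squared distance, $||x_{k+1}-x^*||_2^2=||x_k-x^*||_2^2-2t\,g_k^T(x_k-x^*)+t^2||g_k||_2^2$, and take the conditional expectation given $x_k$. Two ingredients replace the unbiasedness used in ordinary SGD. The first is the XOR-Sampling approximation guarantee, which yields a multiplicative bound relating $\mathbb{E}[g_k]$ to the true gradient $\nabla F(x_k)$ through the constant $\rho\kappa$; combined with convexity of $F$ this lower-bounds the inner-product term by $\tfrac{1}{\rho\kappa}\left(F(x_k)-OPT\right)$. The second is a second-moment bound $\mathbb{E}||g_k||_2^2$ of order $(\sigma^2+\varepsilon^2)/N$, obtained by splitting the second moment into variance plus squared mean via $Var(\cdot)=\mathbb{E}||\cdot||_2^2-||\mathbb{E}[\cdot]||_2^2$, invoking the definitions of $\sigma^2$ and $\varepsilon^2$, and using the variance reduction from averaging $N$ i.i.d.\ samples. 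Absorbing the $\rho\kappa$ factors into both the linear descent term and the $L$-smoothness quadratic term gives a one-step inequality of the schematic form
\begin{align*}
\mathbb{E}\,||x_{k+1}-x^*||_2^2 \;\le\; ||x_k-x^*||_2^2 - 2t\,c\,\left(F(x_k)-OPT\right) + t^2 B_N,
\end{align*}
where $B_N$ is the second-moment term above and the coefficient $c$ stays nonnegative precisely because $t\le\frac{2-\rho^2\kappa^2}{L\rho\kappa}$. This is exactly where the restriction $1\le\rho\kappa\le\sqrt2$ is forced: $2-\rho^2\kappa^2$ must be nonnegative for an admissible positive step size to exist.

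Finally I would telescope this one-step inequality over $k=0,\dots,K-1$, collapsing the distance terms to $||x_0-x^*||_2^2$, divide by $2tcK$, and apply convexity/Jensen to move from $\frac1K\sum_k F(x_k)$ to $F(\overline{x_K})$. Tracking the constants through the division (the coefficient $c$ contributing the $\rho\kappa$ factor in the first term, and the accumulated $B_N$ contributing the $1/N$ noise floor) reproduces the two advertised terms $\frac{\rho\kappa\,||x_0-x^*||_2^2}{2tK}$ and $\frac{t(\sigma^2+\varepsilon^2)}{N}$.

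I expect the main obstacle to be the first ingredient. In the textbook proof, unbiasedness $\mathbb{E}[g_k]=\nabla F(x_k)$ is exactly what lets convexity convert $\mathbb{E}[g_k]^T(x_k-x^*)$ into the suboptimality gap $F(x_k)-OPT$; here only an approximate, possibly one-sided, multiplicative control on $\mathbb{E}[g_k]$ is available. The delicate part is showing that the XOR-approximate gradient still descends enough, namely that the inner-product term and the $||g_k||_2^2$ term can be bounded \emph{simultaneously} by powers of $\rho\kappa$ in directions that leave a strictly positive multiple of $F(x_k)-OPT$ once the step-size constraint is imposed. Getting the sense of each multiplicative inequality right, and verifying that the net coefficient is nonnegative exactly when $\rho\kappa\le\sqrt2$, is the crux of the argument.
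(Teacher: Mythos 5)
Your skeleton (distance recursion, multiplicative bias bound plus convexity for the inner-product term, variance reduction, telescoping, Jensen) is the right one, but it is not the route the paper takes for this statement. The paper's own proof (the appendix ``Proof of Theorem 4,'' which restates exactly this theorem) never redoes the SGD analysis: it invokes the core biased-gradient convergence theorem of \cite{fan2021xorsgd} as a black box and only verifies its hypotheses for the $N$-sample average $\overline{g_k}$, namely (i) that the sign-split multiplicative bounds $\frac{1}{\rho\kappa}\mathbb{E}_\theta[\nabla f(x_k,\theta)]^+\le\mathbb{E}[\overline{g_k}^+]\le\rho\kappa\,\mathbb{E}_\theta[\nabla f(x_k,\theta)]^+$ (and the mirrored one for negative parts) survive averaging, via the dimension-by-dimension identity $\overline{g_k}^{\pm}=\frac1N\sum_i g_k^{i\pm}$, and (ii) that $Var(\overline{g_k})=Var(g_k^i)/N\le\rho\kappa(\sigma^2+\varepsilon^2)/N$, where the change of measure from the XOR distribution $p'$ back to $p$ is what brings $\varepsilon^2$ into the tail. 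Your plan instead re-derives the imported core theorem from scratch; that is legitimate and more self-contained, but it means you also take on the burden of the inner-product lemma $\langle\mathbb{E}[g_k],x_k-x^*\rangle\ge\frac{1}{\rho\kappa}\langle\nabla F(x_k),x_k-x^*\rangle$, which you correctly flag as the crux and then leave unproven; the paper imports exactly this as Lemma~1.

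Beyond that omission, two concrete steps in your sketch would fail if implemented literally. First, the claim that $\mathbb{E}\|g_k\|_2^2$ is of order $(\sigma^2+\varepsilon^2)/N$ is false: only the variance contracts by $1/N$, while the squared mean $\|\mathbb{E}[g_k]\|_2^2$ is $\Theta(1)$ in $N$. (If the whole second moment decayed like $1/N$, no step-size restriction involving $L$ would be needed at all, contradicting your own use of it.) The correct split is $\mathbb{E}\|g_k\|^2=Var(g_k)+\|\mathbb{E}[g_k]\|^2$, with only $Var(g_k)\le\rho\kappa(\sigma^2+\varepsilon^2)/N$ feeding the noise floor and the squared-mean term handled separately. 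Second, ``the coefficient stays nonnegative'' is not enough to recover the advertised constant $\frac{\rho\kappa\|x_0-x^*\|_2^2}{2tK}$: you need the descent coefficient to remain a full $\frac{2t}{\rho\kappa}$. If you absorb $t^2\|\mathbb{E}[g_k]\|^2\le t^2\rho^2\kappa^2\|\nabla F(x_k)\|^2$ using $\|\nabla F\|^2\le 2L(F-OPT)$, the surviving coefficient is $\frac{2t}{\rho\kappa}(\rho^2\kappa^2-1)^2$, which vanishes as $\rho\kappa\to1$ and destroys the bound. The repair is the smooth-convex interpolation inequality $\langle\nabla F(x_k),x_k-x^*\rangle\ge F(x_k)-OPT+\frac{1}{2L}\|\nabla F(x_k)\|^2$: the bias quadratic is then cancelled entirely by the $\frac{1}{2L}\|\nabla F\|^2$ surplus whenever $t\le\frac{1}{L\rho^3\kappa^3}$, which is implied by the stated condition $t\le\frac{2-\rho^2\kappa^2}{L\rho\kappa}$ because $(\rho^2\kappa^2-1)^2\ge0$, leaving $\frac{2t}{\rho\kappa}\bigl(F(x_k)-OPT\bigr)$ intact. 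Note also that $\rho\kappa\le\sqrt2$ is used twice, not once: to make the step-size interval nonempty, as you say, and again at the end to turn the tail $\frac{\rho^2\kappa^2\,t(\sigma^2+\varepsilon^2)}{2N}$ into the advertised $\frac{t(\sigma^2+\varepsilon^2)}{N}$.
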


Theorem 1 states that in expectation, the difference in terms of the objective function values between the output of XOR-SGD algorithm $\overline{x_K}$ and the true optimum $OPT$ is bounded
by a term that scales inversely proportional to the number of SGD iterations $K$ and a tail term $\frac{t(\sigma^2+\epsilon^2)}{N}$. Although hard to compute, both $\sigma^2$ and $\epsilon^2$ are from the input and do not depend on the algorithm. However, although the authors provide a constrained version of XOR-SGD in their paper, they are not able to give a theoretical guarantee of convergence rate of this algorithm under the constrained setting, which is known to be more intractable due to the extra constraints on the solution space.

\section{XOR-PROJECTED GRADIENT DESCENT}\label{sec:method}
In this section we propose XOR-PGD, a new method to solve the stochastic convex optimization problems with extra constraints. 
XOR-PGD converges to solutions that are at most a constant  away from the true optimum in linear number of iterations.
The detailed procedure of XOR-PGD for is shown in Algorithm \ref{alg: XOR-PGD}.
To approximate the gradient $\nabla_x \mathbb{E}_\theta f(x_k, \theta)$ at step $k$, XOR-PGD draws $N$ samples $\theta_1, \dots, \theta_N$ from $Pr(\theta)$ using XOR-Sampling. 
Because XOR-Sampling has a failure rate, XOR-PGD repeatedly call XOR-Sampling until all $N$ samples are obtained successfully (line 4 -- 10). 
Once $\theta_1, \dots, \theta_N$ are obtained, XOR-PGD uses the empirical mean 
$\overline{g_k}=\frac{1}{N}\sum_{i=1}^N\nabla_x f(x_k,\theta_i)$
as an approximation for $\nabla_x \mathbb{E}_\theta f(x_k, \theta)$. 

Then, by setting the step size $t_k$ as $\frac{\rho\kappa}{\mu\kappa}$, we update $x_k$ by first a minus of $t_k\overline{g_k}$ and then a projection $\mathbb{P}_C(x)$  to project it to to constraints space $C$. Here we define $\mathbb{P}_C(x)$ in Equation \ref{eq:Pc}:
\begin{align}\label{eq:Pc}
    \mathbb{P}_C(x):=\arg \min_{y\in C}\frac{1}{2}\|x-y\|_2^2
\end{align}
Using this projection, we can prove that the output of XOR-PGD in expectation converges to the true optimum within a small constant distance at a linear speed w.r.t. the number of iterations $K$.

\begin{algorithm}[t]
   \caption{XOR-PGD}
   \label{alg: XOR-PGD}
% \begin{algorithmic}
   \LinesNumbered
   \KwIn{$f(x,\theta),\mu, w(\theta),K,N,l, b,\delta, P, \alpha, C$}
   Initialize $x_1$ for function $f(x,\theta)$\\
   \For{$k=1$ to $K-1$}{
        $i\gets 1$\\
       \While{$i\leq N$}{
            \small{$s\gets$ XOR-Sampling($w(\theta), l, b,\delta, P, \alpha$)}\\
            \If{$s \neq Failure$}  {
                $\theta_i\gets s$\\  %\tcp*[f]{\text{$s==-1$ means failure of XOR-Sampling}}\\
                $i\gets i+1$
            }
       }
       Compute $\overline{g_k}\gets\frac{1}{N}\sum_{i=1}^N\nabla_x f(x_k,\theta_i)$\\
       Compute $t_k \gets \frac{\rho\kappa}{\mu k}$\\
       Update $x_{k+1}\gets \arg \min_{y\in C}\frac{1}{2}\|x_{k}-t_k\overline{g_k}-y\|_2^2$\\
   }
   ${x_K}\gets \overline{x_K}=\frac{1}{K}\sum_{k=1}^K x_k$\\
   \textbf{return} ${x_K}$
% \end{algorithmic}
\end{algorithm}

\begin{algorithm}[t]
   \caption{Improved XOR-PGD }
   \label{alg: improved XOR-PGD}
% \begin{algorithmic}
   \LinesNumbered
   ... as in Algorithm 1, except \\
   replacing line 12 by: $t_k \gets \frac{2\rho\kappa}{\mu (k+1)}$\\
   replacing line 15 by: ${x_K}\gets \reallywidehat{x_K}= \frac{2\sum_{k=1}^K k x_k}{K(K+1)} $
% \end{algorithmic}
\end{algorithm}

\begin{Th}\label{Th:XOR_SGD_bound_constrained}
Let $f:\mathbb{R}^d\rightarrow \mathbb{R}$ be a $L$-smooth and $\mu$-strongly convex function and $x^*=\arg\min_{x\in C} f(x)$, where $C$ is a convex constraints set of $x$. In iteration $k$, $g_k$ is the estimated gradient, i.e., $x_{k+1}=\mathbb{P}_C(x_{k}-t_kg_k)$ where $Var(g_k)\leq \sigma^2$ and $\mathbb{P}_C(x):=\arg \min_{y\in C}\frac{1}{2}\|x-y\|_2^2$ is the projection of $x$ onto $C$. If $t_k = \frac{c}{\mu k}$ and there exists $1\leq c\leq\sqrt{2}$ s.t. $\frac{1}{c}[\nabla f(x_k)]^+ \leq \mathbb{E}[g_k^+]\leq c[\nabla f(x_k)]^+$ and $c[\nabla f(x_k)]^- \leq \mathbb{E}[g_k^-]\leq \frac{1}{c}[\nabla f(x_k)]^-$, then the convergence rate of the XOR-PGD is $O(\frac{\log{K}}{K})$:
\begin{align}
\mathbb{E}[f(\overline{x_K})] - f(x^*) \leq \frac{B}{2\mu K} (1+\log(K)),
\end{align}
where $\overline{x_K}=\frac{1}{K}\sum_{k=1}^{K} x_k$.
\end{Th}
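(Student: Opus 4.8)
The plan is to adapt the classical telescoping analysis of projected stochastic gradient descent for a strongly convex objective with the $\Theta(1/k)$ step size, and to absorb the biasedness of the XOR-based gradient estimate $g_k$ into the argument through the positive/negative-part conditions. The only genuinely new ingredient relative to the standard proof is that $\mathbb{E}[g_k]\neq\nabla f(x_k)$, so every place where unbiasedness is normally invoked must instead be controlled by the multiplicative factor $c$.

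First I would exploit the nonexpansiveness of the Euclidean projection together with $x^*=\mathbb{P}_C(x^*)$ (which holds since $x^*\in C$) to obtain, deterministically,
\[
\|x_{k+1}-x^*\|_2^2=\|\mathbb{P}_C(x_k-t_kg_k)-\mathbb{P}_C(x^*)\|_2^2\le\|x_k-x^*\|_2^2-2t_k\,g_k^T(x_k-x^*)+t_k^2\|g_k\|_2^2 .
\]
Taking the expectation conditioned on $x_k$ leaves two quantities to control: the cross term $\mathbb{E}[g_k]^T(x_k-x^*)$ and the second moment $\mathbb{E}\|g_k\|_2^2=Var(g_k)+\|\mathbb{E}[g_k]\|_2^2\le\sigma^2+\|\mathbb{E}[g_k]\|_2^2$. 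This is where I would cash in the hypotheses: working coordinatewise and using that $[\nabla f(x_k)]^+$ and $[\nabla f(x_k)]^-$ have disjoint supports, the bound $\tfrac1c[\nabla f(x_k)]^+\le\mathbb{E}[g_k^+]\le c[\nabla f(x_k)]^+$ and its companion on the negative part force $\mathbb{E}[g_k]$ to be sign-aligned with $\nabla f(x_k)$ with magnitude within a factor $c$. This yields the two clean estimates $\nabla f(x_k)^T\mathbb{E}[g_k]\ge\tfrac1c\|\nabla f(x_k)\|_2^2$ and $\|\mathbb{E}[g_k]\|_2^2\le c^2\|\nabla f(x_k)\|_2^2$; since $\|\nabla f(x_k)\|_2^2$ can be bounded by the input constant $\varepsilon^2$ of Theorem \ref{Th:XOR-SGD}, the second moment $\mathbb{E}\|g_k\|_2^2\le\sigma^2+c^2\varepsilon^2$ is a constant, which I collect into a single quantity $B$.

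Next I would convert the cross term into a decrease of the objective. Convexity gives $\nabla f(x_k)^T(x_k-x^*)\ge f(x_k)-f(x^*)$, while $\mu$-strong convexity together with the first-order optimality of $x^*$ over $C$ contributes the quadratic term $\tfrac{\mu}{2}\|x_k-x^*\|_2^2$. Feeding these in, together with $\mathbb{E}\|g_k\|_2^2\le B$ and the choice $t_k=\tfrac{c}{\mu k}$ (so that the prefactor $2t_k\cdot\tfrac1c=\tfrac{2}{\mu k}$ exactly cancels the inflation by $c$, with the residual $c^2$ folded into $B$), produces a recursion of the form
\[
f(x_k)-f(x^*)\le\frac{\mu(k-1)}{2}\,\mathbb{E}\|x_k-x^*\|_2^2-\frac{\mu k}{2}\,\mathbb{E}\|x_{k+1}-x^*\|_2^2+\frac{B}{2\mu k}.
\]
Summing from $k=1$ to $K$ makes the first two families telescope to a nonpositive quantity, the harmonic sum obeys $\sum_{k=1}^K\tfrac1k\le1+\log K$, and a final application of Jensen's inequality to the average iterate $\overline{x_K}=\tfrac1K\sum_k x_k$ delivers $\mathbb{E}[f(\overline{x_K})]-f(x^*)\le\tfrac{B}{2\mu K}(1+\log K)$, matching the claim.

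I expect the main obstacle to be the cross term $\mathbb{E}[g_k]^T(x_k-x^*)$, rather than the inner product $\nabla f(x_k)^T\mathbb{E}[g_k]$ handled above: the positive/negative-part hypotheses control the bias of $g_k$ relative to $\nabla f(x_k)$ coordinatewise, but in the cross term each coordinate is weighted by $(x_k-x^*)_i$, whose sign need not agree with that of the gradient, so a naive coordinatewise comparison degrades the factor $\tfrac1c$ into a mismatched $\tfrac1c$/$c$ split on the aligned and misaligned coordinates. Making this step clean is the delicate point, and it is also where the restriction $1\le c\le\sqrt2$ should enter, since that is exactly the range in which the $c^2$-inflated second-moment term stays dominated by the $\tfrac1c$-scaled descent and the telescoping recursion remains contractive.
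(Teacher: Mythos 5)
Your skeleton is the paper's skeleton: nonexpansiveness of $\mathbb{P}_C$ against $x^*=\mathbb{P}_C(x^*)$, the decomposition $\mathbb{E}[\|g_k\|^2]=Var(g_k)+\|\mathbb{E}[g_k]\|^2$, strong convexity to generate the $\frac{\mu}{2}\|x_k-x^*\|^2$ term, the step size $t_k=\frac{c}{\mu k}$ turning the coefficients into $\frac{\mu(k-1)}{2}$ and $\frac{\mu k}{2}$ so the sum telescopes, the harmonic bound $\sum_{k=1}^K \frac1k\le 1+\log K$, and Jensen on $\overline{x_K}$. Your second-moment bound $\|\mathbb{E}[g_k]\|_2^2\le c^2\|\nabla f(x_k)\|_2^2$, obtained coordinatewise, is in fact slightly more direct than the paper's (which derives $\|\mathbb{E}[g_k]\|\le cL$ via $\|\mathbb{E}[g_k]\|^2\le c\langle\nabla f(x_k),\mathbb{E}[g_k]\rangle$ and Cauchy--Schwarz); the difference between your constant ($c^2(\sigma^2+c^2\varepsilon^2)$) and the paper's ($B=c^2(c^2L^2+\sigma^2)$) is only a renaming of the gradient-norm bound.

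The genuine gap is exactly the step you flag in your closing paragraph and then leave unresolved: your recursion requires the cross-term inequality $\langle\mathbb{E}[g_k],\,x_k-x^*\rangle\ \ge\ \frac1c\langle\nabla f(x_k),\,x_k-x^*\rangle$, and you correctly observe that the coordinatewise hypotheses on $\mathbb{E}[g_k^+]$ and $\mathbb{E}[g_k^-]$ do not deliver it when $(x_k-x^*)_i$ and $\nabla f(x_k)_i$ disagree in sign. Announcing this as ``the delicate point'' is not a proof of it, and without it the telescoping never starts. The paper fills this hole by invoking Lemma 1 of \cite{fan2021xorsgd} (restated and reproved in its appendix): that lemma asserts that, by convexity of $f$, the nonzero coordinates of $[x_k-x^*]^+$ coincide with those of $[\nabla f(x_k)]^+$ (and likewise for the negative parts), i.e.\ $x_k-x^*$ is sign-aligned with $\nabla f(x_k)$ coordinate by coordinate; granted that alignment, the multiplicative bounds transfer to the weighted inner product and give $\frac1c\langle\mathbb{E}[g_k],x_k-x^*\rangle\le\langle\nabla f(x_k),x_k-x^*\rangle\le c\langle\mathbb{E}[g_k],x_k-x^*\rangle$. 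So what you were missing is a concrete lemma that must be stated and used, not a reorganization of the inequalities; your suspicion that this alignment is the fragile, load-bearing claim of the whole argument is well founded (it is immediate for one-dimensional or separable convex $f$, and it is precisely what the cited lemma asserts in general). One further correction: the restriction $1\le c\le\sqrt2$ is not what rescues this step and in fact plays no role in this theorem's proof --- once the cross-term lemma is granted, every inequality above holds for any $c\ge1$, with $c$ only inflating $B$; the $\sqrt2$ cap is inherited from the XOR-sampling guarantee and the unconstrained XOR-SGD step-size condition, not from the projected telescoping argument.
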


\begin{proof} (Theorem \ref{Th:XOR_SGD_bound_constrained})
Because orthogonal projections contract distances, we have
\begin{align*}
    \|x_{k+1}-x^*\|^2 &\leq \|x_k - t_kg_k -x^*\|^2\\
    &= \|x_k-x^*\|^2 + t_k^2\|g_k\|^2 - 2t_k\langle g_k, x_k-x^* \rangle
\end{align*}
Taking expectation on both side, we have
\begin{align*}
    \mathbb{E}[\|x_{k+1}-x^*\|^2] &\leq \mathbb{E}[\|x_k-x^*\|^2] + t_k^2\mathbb{E}[\|g_k\|^2] \\
    & -2t_k\langle \mathbb{E}[g_k], x_k-x^* \rangle
\end{align*}
From Lemma 1 in \cite{fan2021xorsgd}, we know $-\langle \mathbb{E}[g_k], x_k-x^* \rangle\leq -\frac{1}{c}\langle \nabla f(x_k), x_k-x^*\rangle$. Then we have
\begin{align*}
    \mathbb{E}[\|x_{k+1}-x^*\|^2] &\leq \mathbb{E}[\|x_k-x^*\|^2] + t_k^2\mathbb{E}[\|g_k\|^2]\\ &- \frac{2t_k}{c}\langle \nabla f(x_k), x_k-x^*\rangle\\
    &\leq \mathbb{E}[\|x_k-x^*\|^2] + t_k^2\mathbb{E}[\|g_k\|^2]\\
    &-\frac{2t_k}{c}[f(x_k)-f(x^*) + \frac{\mu}{2}\|x_k-x^*\|^2]
\end{align*}
The last inequality is because $f$ is $\mu$-strongly convex. After some rearranging and taking expectation on both sides, we have
\begin{equation*}
\begin{aligned}
    &\mathbb{E}[f(x_k)] - f(x^*) \leq \frac{ct_k}{2}\mathbb{E}[\|g_k\|^2]\|]\\ 
    & ~~ +\frac{c-t_k\mu}{2t}\mathbb{E}[\|x_{k}-x^*\|^2] 
    - \frac{c}{2t_k} \mathbb{E}[\|x_{k+1}-x^*\|^2].
\end{aligned} 
\end{equation*}
We know $\mathbb{E}[\|g_k\|^2]=\|\mathbb{E}[g_k]\|^2+\text{Var}(g_k)$, and from Lemma 1 in \cite{fan2021xorsgd} we have
\begin{align*}
\|\mathbb{E}[g_k]\|^2\leq & c\langle \nabla f(x_k), \mathbb{E}[g_k]\rangle \\
 & \leq  c\|\nabla f(x_k)\|\|\mathbb{E}[g_k]\|\leq cL\|\mathbb{E}[g_k]\|
\end{align*}
Therefore, $\|\mathbb{E}[g_k]\|^2\leq c^2L^2$ is bounded above and $\mathbb{E}[\|g_k\|^2]\leq c^2L^2+\sigma^2$. Apply this bound, we have
\begin{equation}\label{key_ineq}
\begin{aligned}
    &\mathbb{E}[f(x_k)] - f(x^*)\\
    &\leq \frac{ct_k(c^2L^2+\sigma^2)}{2} +\frac{c-t_k\mu}{2t_k}\mathbb{E}[\|x_{k}-x^*\|^2]\\
    &- \frac{c}{2t_k} \mathbb{E}[\|x_{k+1}-x^*\|^2]
\end{aligned} 
\end{equation}
With $t_k=\frac{c}{\mu k}$, the above inequality becomes
\begin{align*}
    &\mathbb{E}[f(x_k)] - f(x^*) 
    \leq \frac{c^2(c^2L^2+\sigma^2)}{2\mu k} \\ &+\frac{\mu(k-1)}{2}\mathbb{E}[\|x_{k}-x^*\|^2] 
    - \frac{\mu k}{2} \mathbb{E}[\|x_{k+1}-x^*\|^2].
\end{align*} 
Summing the above equations for $k=1,\ldots,K$, we get
\begin{align*}
    &\mathbb{E}[f(\frac{1}{K}\sum_{k=1}^{K} x_k)] - f(x^*) \leq \frac{1}{K}\sum_{k=1}^{K}\mathbb{E}[f(x_k)] - f(x^*)\\
    &\leq \frac{B}{2\mu K}\sum_{k=1}^{K} \frac{1}{k} + \frac{\mu}{2 K} [0-K\mathbb{E}[\|x_{K+1}-x^*\|^2]\\
    & \leq \frac{B}{2\mu K} (1+\log(K))
\end{align*}
where $B = c^2(c^2L^2+\sigma^2)$. The first inequality is obtained using the convexity of $f$; the second inequality  is obtained from a telescoping sum. Let $\overline{x_K}=\frac{1}{K}\sum_{k=1}^{K} x_k$, the above inequality can be written as
\begin{align*}
\mathbb{E}[f(\overline{x_K})] - f(x^*) \leq \frac{B}{2\mu K} (1+\log(K))
\end{align*}
Therefore, the convergence rate is $O(\frac{\log(K)}{K})$. This completes the proof. 
\end{proof}

Theorem \ref{Th:XOR_SGD_bound_constrained} states that by choosing the step size $t_k$ inverse proportional to the step $k$, i.e., $t_k=\frac{c}{\mu k}$, our XOR-PGD can converge to the optimal value in $O(\frac{\log K}{K})$. In practice, we can increase the sample size from 1 to $N$ to further reduce the variance. By replacing the objective $f(x)$ in Theorem \ref{Th:XOR_SGD_bound_constrained} with $\mathbb{E}_{\theta\sim p(\theta)}f(x,\theta)$, and noticing $Var(\overline{g_k}) = Var_\theta(\nabla_x f(x, \theta)) / N$ due to the sample size $N$, we have the following Theorem \ref{Th:main}.
\begin{Th}\label{Th:main}
Let $\rho,\kappa$ be the constant approximation factor as in \cite{fan2021xorsgd}, function $f(x,\theta): \mathbb{R}^d\times\{0,1\}^n\rightarrow \mathbb{R}$ be a $L$-smooth and $\mu$-strongly convex function w.r.t. $x$. Denote $OPT=\min_{x\in C} \mathbb{E}_{\theta\sim Pr(\theta)}f(x,\theta)$ as the global optimum in constraints set $C$.
Let $\sigma^2=\max_x\{Var(\nabla_x f(x,\theta))\}$ and $\varepsilon^2=\max_x\{||\mathbb{E}[\nabla_x f(x, \theta)]||_2^2\}$. 
For any $1\leq\rho\kappa\leq\sqrt{2}$, step size $t\leq \frac{\rho\kappa}{\mu k}$ and sample size $N\geq1$, $\overline{x_K}=\frac{1}{K}\sum_{k=1}^{K} x_k$ is the output of XOR-PGD (Algorithm \ref{alg: XOR-PGD}) and $\mbox{obj} = \mathbb{E}_{\theta}[f(\overline{x_K}, \theta)]$ is the objective function value at $\overline{x_K}$. We have: 
\begin{align*}
    &\mathbb{E}_{\overline{x_K}}[\mbox{obj}] - OPT\\
     &\leq \left (\frac{\rho^4\kappa^4L^2}{2\mu}+\frac{\rho\kappa(\sigma^2+\varepsilon^2)}{2\mu N}\right )\frac{1+\log(K)}{K} .
    %+\frac{t\sigma^2}{N}+\frac{t\varepsilon^2}{N}\Big{(}1-\frac{1}{\rho^3\kappa^3}\Big{)}
\end{align*}
\end{Th}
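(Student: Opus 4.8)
The plan is to obtain Theorem~\ref{Th:main} as a direct specialization of Theorem~\ref{Th:XOR_SGD_bound_constrained}, since XOR-PGD is exactly the abstract projected-gradient iteration analyzed there, instantiated on the expected objective. First I would set $f(x) := \mathbb{E}_{\theta\sim Pr(\theta)} f(x,\theta)$ and observe that this averaged objective inherits both $L$-smoothness and $\mu$-strong convexity from the per-scenario functions $f(\cdot,\theta)$, because the defining inequalities hold pointwise in $\theta$ and survive taking $\mathbb{E}_\theta$ (expectation is a convex combination). The constrained minimizer $x^* = \arg\min_{x\in C} f(x)$ then coincides with the $OPT$ of the statement, and the update $x_{k+1} = \mathbb{P}_C(x_k - t_k \overline{g_k})$ produced in Algorithm~\ref{alg: XOR-PGD} is precisely the iteration $x_{k+1}=\mathbb{P}_C(x_k - t_k g_k)$ of Theorem~\ref{Th:XOR_SGD_bound_constrained} with estimated gradient $g_k = \overline{g_k}=\frac1N\sum_{i=1}^N \nabla_x f(x_k,\theta_i)$.

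Next I would verify that the hypotheses of Theorem~\ref{Th:XOR_SGD_bound_constrained} hold with $c=\rho\kappa$. The admissible range $1\le\rho\kappa\le\sqrt2$ matches the required $1\le c\le\sqrt2$, and the step size $t_k=\frac{\rho\kappa}{\mu k}$ (line~12) is the instance $t_k=\frac{c}{\mu k}$. The two-sided multiplicative sandwich $\frac1c[\nabla f(x_k)]^+\le \mathbb{E}[g_k^+]\le c[\nabla f(x_k)]^+$ and its analogue for the negative parts are exactly the constant-factor approximation guarantee of XOR-Sampling recorded in Lemma~1 of \cite{fan2021xorsgd}; crucially this is a multiplicative, \emph{not} an unbiased, bound on the expected sampled gradient, and it carries over verbatim to the averaged estimator $\overline{g_k}$ because averaging preserves the expectation. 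The only hypothesis that changes numerically is the variance: since the $N$ XOR samples are drawn independently, $Var(\overline{g_k}) = Var_\theta(\nabla_x f(x,\theta))/N \le \sigma^2/N$, so the role played by $\sigma^2$ in Theorem~\ref{Th:XOR_SGD_bound_constrained} is now played by $\sigma^2/N$.

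Finally I would assemble the bound by substituting $c=\rho\kappa$ and the reduced variance into the conclusion $\mathbb{E}[f(\overline{x_K})]-f(x^*)\le \frac{B}{2\mu K}(1+\log K)$. The constant $B$ collects the second moment $\mathbb{E}[\|\overline{g_k}\|^2]=\|\mathbb{E}[\overline{g_k}]\|^2+Var(\overline{g_k})$: the mean-square term yields the $\rho^4\kappa^4 L^2$ piece through the sandwich-plus-smoothness estimate $\|\mathbb{E}[\overline{g_k}]\|^2\le c^2L^2$ (with the expected-gradient norm $\varepsilon^2=\max_x\|\mathbb{E}[\nabla_x f]\|^2$ entering the tail alongside it), while the variance term contributes $\sigma^2/N$; collecting these reproduces $\frac{\rho^4\kappa^4 L^2}{2\mu}+\frac{\rho\kappa(\sigma^2+\varepsilon^2)}{2\mu N}$ up to the standard bookkeeping of the approximation factor. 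I expect the only genuinely delicate point---everything else being a mechanical constant chase---to be the same one that motivates Theorem~\ref{Th:XOR_SGD_bound_constrained}: because XOR-Sampling yields a \emph{biased} gradient estimate, one may not invoke the classical SGD identity $\mathbb{E}[g_k]=\nabla f(x_k)$, and must route the inner-product and norm bounds entirely through the multiplicative sandwich, which is exactly what that theorem was engineered to permit.
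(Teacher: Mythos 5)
Your overall route is exactly the paper's: instantiate Theorem~\ref{Th:XOR_SGD_bound_constrained} on $f(x)=\mathbb{E}_{\theta}f(x,\theta)$ (which indeed inherits $L$-smoothness and $\mu$-strong convexity), take $c=\rho\kappa$, carry the positive/negative-part sandwich from each XOR sample to the average $\overline{g_k}$, and feed a reduced variance into the constant $B=c^2(c^2L^2+\sigma^2)$. However, there is a genuine gap in your variance step. You assert $Var(\overline{g_k})=Var_{\theta}(\nabla_x f(x,\theta))/N\le \sigma^2/N$, but the $N$ samples are drawn from the XOR-Sampling surrogate distribution $p'(\theta')$, not from $Pr(\theta)$, while $\sigma^2$ is defined as a variance under the \emph{true} distribution. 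Since $p'$ only agrees with $p$ up to the multiplicative factor $\rho\kappa$, you cannot identify the per-sample variance with $Var_{\theta\sim p}(\nabla_x f)$. The paper's proof handles this by bounding the variance through the second moment and the approximation guarantee: $Var(g_k^i)\le \mathbb{E}_{\theta'\sim p'}[\|\nabla f(x_k,\theta')\|_2^2]\le \rho\kappa\,\mathbb{E}_{\theta\sim p}[\|\nabla f(x_k,\theta)\|_2^2]=\rho\kappa\left(Var(\nabla f(x_k,\theta))+\|\mathbb{E}_{\theta}[\nabla f(x_k,\theta)]\|_2^2\right)\le \rho\kappa(\sigma^2+\varepsilon^2)$, and only then divides by $N$.

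This is not a cosmetic constant chase: the variance channel is the \emph{only} route by which $\varepsilon^2$ enters the theorem's bound, and it is why $\varepsilon^2$ appears divided by $N$ in the conclusion. In your accounting, $\varepsilon^2$ is said to enter ``alongside'' the mean-square term, but that term is bounded by $c^2L^2$ via the sandwich-plus-smoothness argument and is not divided by $N$, so your bookkeeping has no mechanism producing the $\rho\kappa(\sigma^2+\varepsilon^2)/(2\mu N)$ term; it would instead yield only a $\sigma^2/N$ contribution (and that contribution is not even justified, per the point above). The fix is to apply to the second moment the same discipline you correctly applied to the mean: route it through the multiplicative XOR-Sampling guarantee relating $p'$ to $p$. (As a side remark, the paper's informal sentence before the theorem makes the same simplification you did, writing $Var(\overline{g_k})=Var_\theta(\nabla_x f(x,\theta))/N$, but its actual proof does the careful $\rho\kappa(\sigma^2+\varepsilon^2)$ bound; the paper is also internally sloppy about whether the resulting prefactor is $\rho\kappa$, $\rho^2\kappa^2$, or $\rho^3\kappa^3$, which your ``standard bookkeeping'' caveat does not resolve either.)
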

\begin{proof}(Theorem 5)
Since we use $N$ samples at each iteration, we have $\overline{g_k}=\frac{1}{N}\sum_{i=1}^Ng_k^i$ and $\mathbb{E}[\overline{g_k}]=\mathbb{E}[g_k^i]$.
In each iteration $k$ we can adjust the parameters in XOR-Sampling to make the tail $\epsilon\eta_{\phi}$ zero, then for each sample $g_k^i$ we can obtain from Theorem 2 that
\begin{align}
    \frac{1}{\rho\kappa} \mathbb{E}_\theta [\nabla f(x_k, \theta)]^+ &\leq \mathbb{E}[g_k^{i+}]\leq \rho\kappa\mathbb{E}_\theta [\nabla f(x_k, \theta)]^+.\label{eq:pos2}\\
    \rho\kappa \mathbb{E}_\theta [\nabla f(x_k, \theta)]^- &\leq \mathbb{E}[g_k^{i-}]\leq  \frac{1}{\rho\kappa}\mathbb{E}_\theta [\nabla f(x_k, \theta)]^-.\label{eq:neg2}
\end{align}
The variance of each sample $g_k^i$ can also be bounded by
\begin{align*}
    &Var(g_k^i) \\
    &= \mathbb{E}_{\theta' \sim p'(\theta')} [||\nabla f(x_k, \theta')||_2^2] - ||\mathbb{E}_{\theta' \sim p'(\theta')} [\nabla f(x_k, \theta')]||_2^2,\\
    &\leq \rho\kappa\mathbb{E}_{\theta \sim p(\theta)} [||\nabla f(x_k, \theta)||_2^2],\\
    &= \rho\kappa (Var(\nabla f(x_k,\theta)) + ||\mathbb{E}_{\theta \sim p(\theta)} [\nabla f(x_k, \theta)]||_2^2),\\
    &\leq \rho\kappa(\sigma^2 +\varepsilon^2).
\end{align*}
Denote $\overline{g_k}^+=\max\{\overline{g_k},\textbf{0}\}$ and $\overline{g_k}^-=\min\{\overline{g_k},\textbf{0}\}$. 
Clearly, $g_k^{i+} \geq 0$ and $g_k^{i-} \leq 0$. Moreover,
for a given dimension, either $g_k^{i+}=0$ for that dimension or $g_k^{i-}=0$. 
Evaluating $\overline{g_k}$ dimension by dimension, we can see that
$\overline{g_k}^+=\frac{1}{N}\sum_{i=1}^N g_k^{i+}$ and $\overline{g_k}^-=\frac{1}{N}\sum_{i=1}^N g_k^{i-}$.
Combined with Equation~\ref{eq:pos2} and \ref{eq:neg2}, we know 
\begin{align*}
    \frac{1}{\rho\kappa} \mathbb{E}_\theta [\nabla f(x_k, \theta)]^+ &\leq \mathbb{E}[\overline{g_k}^+]\leq \rho\kappa\mathbb{E}_\theta [\nabla f(x_k, \theta)]^+.\\
    \rho\kappa \mathbb{E}_\theta [\nabla f(x_k, \theta)]^- &\leq \mathbb{E}[\overline{g_k}^-]\leq  \frac{1}{\rho\kappa}\mathbb{E}_\theta [\nabla f(x_k, \theta)]^-.
\end{align*}

Because $\mathbb{E}[\overline{g_k}]=\mathbb{E}[g_k^i]$, we also have
\begin{align*}
    Var(\overline{g_k})=\frac{1}{N^2}Var(\sum_{i=1}^Ng_k^i)=\frac{Var(g_k^i)}{N}.
\end{align*}
Then the variance of $\overline{g_k}$ can be bounded as
\begin{align*}
    Var(\overline{g_k})&\leq \frac{\rho\kappa(\sigma^2+\varepsilon^2)}{N}.
\end{align*}

Therefore, we can then apply Theorem 5 to get the result in equation 5.
\begin{align*}
     &\mathbb{E}_{\overline{x_K}}[\mathbb{E}_{\theta}[f(\overline{x_K}, \theta)]]  -\mathbb{E}_{\theta}[f(x^*,\theta)]\\
     &\leq \frac{\rho^4\kappa^4L^2+\rho^2\kappa^2\max_k\{Var(\overline{g_k})\}}{2\mu K} (1+\log(K))\\
     &\leq \frac{\rho^4\kappa^4L^2+\rho^2\kappa^2\frac{\sigma^2+\varepsilon^2}{N}}{2\mu K} (1+\log(K))
\end{align*}
which can also be written as
\begin{align*}
&\mathbb{E}_{\overline{x_K}}[obj] - OPT\\
     &\leq \left (\frac{\rho^4\kappa^4L^2}{2\mu}+\frac{\rho\kappa(\sigma^2+\varepsilon^2)}{2\mu N}\right )\frac{1+\log(K)}{K} .
\end{align*}
This completes the proof.
\end{proof}

In Theorem \ref{Th:main} we can see the difference to the optimum is inversely proportional to both $K$ and $N$ to some extent. To tighten the bound with fixed number of iterations $K$, we can either conduct more XOR-Sampling scheme leading to smaller $\rho\kappa$, or generate more samples at each iteration to reduce the variance term.

\subsection{Improved XOR-PGD}
Although we prove that the proposed XOR-PGD has a theoretical guarantee on the convergence rate, this convergence rate of $O(\frac{\log K}{K})$ is not rather surprising. Therefore, we further improve the algorithm by selecting a more reasonable step size. By selecting $t_k=\frac{2\rho\kappa}{\mu(k+1)}$ as in Algorithm \ref{alg: improved XOR-PGD}, and compute the last output from a weigthed average ${x_K}\gets \reallywidehat{x_K}= \frac{2\sum_{k=1}^K k x_k}{K(K+1)} $,  we develop a improved algorithm of XOR-PGD named (I)XOR-PGD in Algorithm \ref{alg: improved XOR-PGD}. We prove that this (I)XOR-PGD is able to further accelerate the convergence rate to $O(\frac{1}{K})$. 

\begin{Th}\label{Th:XOR_SGD_bound_constrained_improved} 
Let $f:\mathbb{R}^d\rightarrow \mathbb{R}$ be a $L$-smooth and $\mu$-strongly convex function and $x^*=\arg\min_{x\in C} f(x)$, where $C$ is a convex  set of $x$. In iteration $k$, $g_k$ is the estimated gradient, i.e., $x_{k+1}=\mathbb{P}_C(x_{k}-t_kg_k)$ where $Var(g_k)\leq \sigma^2$ and $\mathbb{P}_C(x):=\arg \min_{y\in C}\frac{1}{2}\|x-y\|_2^2$ is the projection of $x$ onto $C$. If $t_k = \frac{2c}{\mu (k+1)}$ and there exists $1\leq c\leq\sqrt{2}$ s.t. $\frac{1}{c}[\nabla f(x_k)]^+ \leq \mathbb{E}[g_k^+]\leq c[\nabla f(x_k)]^+$ and $c[\nabla f(x_k)]^- \leq \mathbb{E}[g_k^-]\leq \frac{1}{c}[\nabla f(x_k)]^-$, then the convergence rate of the Improved XOR-PGD algorithm is $O(\frac{1}{K})$:
\begin{align*}
    &\mathbb{E}\left [f\left ( \reallywidehat{x_K}\right )\right ]-f(x^*) \leq \frac{2B}{\mu (K+1)},
\end{align*}
where $\reallywidehat{x_K}=\frac{2}{K(K+1)} \sum_{k=1}^K k x_k$.
\end{Th}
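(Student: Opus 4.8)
The plan is to reuse the step-size-independent machinery already assembled in the proof of Theorem~\ref{Th:XOR_SGD_bound_constrained} and change only the bookkeeping at the end. Concretely, I would start from inequality \eqref{key_ineq}, namely
\[
\mathbb{E}[f(x_k)] - f(x^*) \leq \frac{c t_k (c^2 L^2 + \sigma^2)}{2} + \frac{c - t_k\mu}{2t_k}\mathbb{E}[\|x_k - x^*\|^2] - \frac{c}{2t_k}\mathbb{E}[\|x_{k+1} - x^*\|^2],
\]
which holds verbatim in the present setting because its derivation (contraction of the projection $\mathbb{P}_C$, Lemma~1 of \cite{fan2021xorsgd}, $\mu$-strong convexity, and the bound $\mathbb{E}[\|g_k\|^2]\le c^2L^2+\sigma^2$) never invoked the particular form of $t_k$. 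All that has changed between the two theorems is the step size, so only the final accounting needs reworking.

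Next I would substitute the new step size $t_k = \frac{2c}{\mu(k+1)}$ into this inequality. A short computation gives $\frac{c t_k(c^2L^2+\sigma^2)}{2} = \frac{B}{\mu(k+1)}$ with $B = c^2(c^2L^2+\sigma^2)$, together with $\frac{c-t_k\mu}{2t_k} = \frac{\mu(k-1)}{4}$ and $\frac{c}{2t_k} = \frac{\mu(k+1)}{4}$, so that
\[
\mathbb{E}[f(x_k)] - f(x^*) \leq \frac{B}{\mu(k+1)} + \frac{\mu(k-1)}{4}\mathbb{E}[\|x_k-x^*\|^2] - \frac{\mu(k+1)}{4}\mathbb{E}[\|x_{k+1}-x^*\|^2].
\]

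The crucial step, and the one that motivates both the altered step size and the weighted output $\reallywidehat{x_K}$, is to multiply the $k$-th inequality by the weight $k$ before summing. Writing $a_k := \frac{\mu k(k-1)}{4}\mathbb{E}[\|x_k - x^*\|^2]$, multiplication by $k$ turns the two distance terms into exactly $a_k - a_{k+1}$, a telescoping difference; this perfect cancellation is precisely what the factor $2$ and the index shift $k+1$ in $t_k$ are calibrated to produce. Summing over $k=1,\dots,K$ collapses the distance terms to $a_1 - a_{K+1}\le 0$ (here $a_1 = 0$ since the $(k-1)$ factor vanishes, while $a_{K+1}\ge 0$), and the noise terms contribute $\sum_{k=1}^K \frac{kB}{\mu(k+1)} = \frac{B}{\mu}\sum_{k=1}^K\bigl(1 - \frac{1}{k+1}\bigr) \le \frac{BK}{\mu}$. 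Hence $\sum_{k=1}^K k\bigl(\mathbb{E}[f(x_k)] - f(x^*)\bigr) \le \frac{BK}{\mu}$.

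Finally I would divide by $\sum_{k=1}^K k = \frac{K(K+1)}{2}$ and apply Jensen's inequality to the convex function $f$ at the convex combination $\reallywidehat{x_K} = \frac{2}{K(K+1)}\sum_{k=1}^K k x_k$, obtaining
\[
\mathbb{E}[f(\reallywidehat{x_K})] - f(x^*) \le \frac{2}{K(K+1)}\sum_{k=1}^K k\bigl(\mathbb{E}[f(x_k)] - f(x^*)\bigr) \le \frac{2}{K(K+1)}\cdot\frac{BK}{\mu} = \frac{2B}{\mu(K+1)},
\]
which is the claimed $O(1/K)$ rate. The only genuinely non-routine ingredient is discovering this weighting: once one sees that weighting by $k$ makes the distance terms telescope with a vanishing leading term, the improvement over Theorem~\ref{Th:XOR_SGD_bound_constrained} follows because the harmonic sum $\sum_k \frac{1}{k+1}$ is replaced by the bounded sum $\sum_k \frac{k}{k+1}\le K$. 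I expect the main obstacle to be purely verificational rather than conceptual: confirming the three step-size substitutions and checking that the negative distance term matches $a_{k+1}$ index-for-index are the places most prone to arithmetic slips, so I would verify those carefully before trusting the telescoping.
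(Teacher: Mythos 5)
Your proposal is correct and takes essentially the same route as the paper's own proof: start from inequality~\eqref{key_ineq}, substitute $t_k=\frac{2c}{\mu(k+1)}$ (yielding exactly the three quantities $\frac{B}{\mu(k+1)}$, $\frac{\mu(k-1)}{4}$, $\frac{\mu(k+1)}{4}$), multiply by the weight $k$ so the distance terms telescope with vanishing leading term, and finish with Jensen's inequality on the weighted average $\reallywidehat{x_K}$. Your write-up is in fact slightly cleaner on one point: the paper's prose says ``multiplying by $k+1$'' even though its displayed algebra, like yours, multiplies by $k$.
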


\begin{proof}
With $t_k=\frac{2c}{\mu (k+1)}$ and multiplying~\eqref{key_ineq} by $k+1$, we have 
\begin{align*}
    &k(\mathbb{E}[f(x_k)]-f(x^*))\leq \frac{kB}{\mu (k+1)}\\& + \frac{\mu}{4} \left ( k(k-1)\mathbb{E}[\|x_{k}-x^*\|^2] - k(k+1)\mathbb{E}[\|x_{k+1}-x^*\|^2]\right )\\
    &\leq \frac{B}{\mu}  + \frac{\mu}{4}  ( k(k-1)\mathbb{E}[\|x_{k}-x^*\|^2]\\
    & \qquad ~ - k(k+1)\mathbb{E}[\|x_{k+1}-x^*\|^2] )
\end{align*}
Summing from $k=1$ to $k=K$, we have
\begin{align*}
    &\sum_{k=1}^K k(\mathbb{E}[f(x_k)]-f(x^*))\\ &\leq \frac{KB}{\mu} + \frac{\mu}{4}(0-K(K+1)\mathbb{E}[\|x_{k+1}-x^*\|^2]).
\end{align*}
Then, we have
\begin{align*}
    &\mathbb{E}\left [f\left ( \frac{2}{K(K+1)} \sum_{k=1}^K k x_k\right )\right ]-f(x^*) +\frac{\mu}{2} \mathbb{E}[\|x_{k+1}-x^*\|^2]\\ &\leq \frac{2B}{\mu (K+1)},
\end{align*}
which implies
\begin{align*}
    &\mathbb{E}\left [f\left ( \reallywidehat{x_K}\right )\right ]-f(x^*) \leq \frac{2B}{\mu (K+1)},
\end{align*}
where $\reallywidehat{x_K}=\frac{2}{K(K+1)} \sum_{k=1}^K k x_k$. Here we prove the $O(1/K)$ convergence rate. 
%Notably, the averaging scheme can be efficiently implemented in an online fashion:
%\begin{equation}
%    \reallywidehat{x_{k}} = (1-\rho_k)\reallywidehat{x_{k-1}} + \rho_k x_k,
%\end{equation}
%where $\rho_k = \frac{2}{k+2}$.
\end{proof}

Similarly, we can get the final convergence rate of the Improved XOR-PGD as in Algorithm \ref{alg: improved XOR-PGD} by increasing the sample size from 1 to $N$.

\begin{Co}\label{improved main}
(Main) Let $\rho,\kappa$ be as before, function $f(x,\theta): \mathbb{R}^d\times\{0,1\}^n\rightarrow \mathbb{R}$ be a $L$-Lipschitz and $\mu$-strongly convex function w.r.t. $x$. Denote $OPT=\min_{x\in C} \mathbb{E}_{\theta\sim Pr(\theta)}f(x,\theta)$ as the global optimum. 
Let $\sigma^2=\max_x\{Var(\nabla_x f(x,\theta))\}$ and $\varepsilon^2=\max_x\{||\mathbb{E}[\nabla_x f(x, \theta)]||_2^2\}$. 
For any $1\leq\rho\kappa\leq\sqrt{2}$, step size $t\leq \frac{2\rho\kappa}{\mu (k+1)}$ and sample size $N\geq1$, $\reallywidehat{x_K}=\frac{2}{K(K+1)} \sum_{k=1}^K k x_k$ is the output of Improved XOR-PGD (algorithm \ref{alg: improved XOR-PGD}) and $\mbox{obj} = \mathbb{E}_{\theta}[f(\reallywidehat{x_K}, \theta)]$ is the objective function value at $\reallywidehat{x_K}$. We have: 
\begin{align*}
    &\mathbb{E}_{\reallywidehat{x_K}}[\mbox{obj}] - OPT\\
     &\leq \left (\frac{\rho^4\kappa^4L^2}{\mu}+\frac{\rho\kappa(\sigma^2+\varepsilon^2)}{\mu N}\right )\frac{1}{K+1} .
\end{align*}
\end{Co}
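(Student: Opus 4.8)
The plan is to obtain Corollary~\ref{improved main} from Theorem~\ref{Th:XOR_SGD_bound_constrained_improved} in precisely the way that Theorem~\ref{Th:main} is obtained from Theorem~\ref{Th:XOR_SGD_bound_constrained}. The improved single-sample result already delivers $\mathbb{E}[f(\reallywidehat{x_K})]-f(x^*)\le \tfrac{2B}{\mu(K+1)}$ with $B=c^2(c^2L^2+\sigma^2)$, so the whole task reduces to instantiating the abstract constants. I would take the step-size constant to be $c=\rho\kappa$ (legal since $1\le\rho\kappa\le\sqrt{2}$) and apply the theorem not to a single stochastic gradient but to the $N$-sample empirical mean $\overline{g_k}=\tfrac{1}{N}\sum_{i=1}^N g_k^i$ used in Algorithm~\ref{alg: improved XOR-PGD}, with the noise level $\sigma^2$ replaced by $Var(\overline{g_k})$.

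To make that substitution legitimate, $\overline{g_k}$ must satisfy the two hypotheses of Theorem~\ref{Th:XOR_SGD_bound_constrained_improved} with $c=\rho\kappa$: the one-sided multiplicative sandwich on $\mathbb{E}[\overline{g_k}^{\pm}]$ and a bound on $Var(\overline{g_k})$. Both facts are exactly the ones already established inside the proof of Theorem~\ref{Th:main}, and they transfer verbatim because they concern only the sampler and the averaging, not which averaged-iterate inequality is invoked afterward. In particular, the XOR-Sampling guarantee gives, for each individual sample, $\tfrac{1}{\rho\kappa}\mathbb{E}_\theta[\nabla f(x_k,\theta)]^+\le \mathbb{E}[g_k^{i+}]\le \rho\kappa\,\mathbb{E}_\theta[\nabla f(x_k,\theta)]^+$ (and the mirrored inequality for the negative part), together with $Var(g_k^i)\le \rho\kappa(\sigma^2+\varepsilon^2)$.

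The step I expect to be the crux is lifting the one-sided bounds from a single sample to the average, since $[\cdot]^+$ and $[\cdot]^-$ do not commute with averaging in general. The key observation is coordinatewise: for any fixed dimension, each sample contributes to exactly one of $g_k^{i+},g_k^{i-}$ (the other being zero there), so evaluating dimension by dimension gives $\overline{g_k}^+=\tfrac{1}{N}\sum_i g_k^{i+}$ and $\overline{g_k}^-=\tfrac{1}{N}\sum_i g_k^{i-}$; averaging the per-sample sandwiches then yields the same sandwich for $\overline{g_k}$. Independence of the $N$ draws gives $Var(\overline{g_k})=Var(g_k^i)/N\le \rho\kappa(\sigma^2+\varepsilon^2)/N$. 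Finally I would plug $c=\rho\kappa$ and $\sigma^2\mapsto Var(\overline{g_k})$ into $B$, so that $B=\rho^4\kappa^4 L^2+\rho^2\kappa^2\,Var(\overline{g_k})$, insert this into $\tfrac{2B}{\mu(K+1)}$, and bound the variance term to reach the stated display — identical in form to the final simplification of Theorem~\ref{Th:main} but with the rate factor $(1+\log K)/K$ replaced by $1/(K+1)$ and the leading coefficient correspondingly doubled.
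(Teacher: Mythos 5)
Your proposal is correct and coincides with the paper's intended argument: the paper explicitly skips this proof as ``very similar'' to its proof of the $N$-sample bound for plain XOR-PGD, and your route --- applying Theorem~\ref{Th:XOR_SGD_bound_constrained_improved} with $c=\rho\kappa$ to the empirical mean $\overline{g_k}$, lifting the per-sample XOR-sampling sandwich bounds coordinatewise via $\overline{g_k}^{+}=\frac{1}{N}\sum_{i=1}^N g_k^{i+}$ and $\overline{g_k}^{-}=\frac{1}{N}\sum_{i=1}^N g_k^{i-}$, and using independence to get $Var(\overline{g_k})=Var(g_k^i)/N\leq\rho\kappa(\sigma^2+\varepsilon^2)/N$ --- is precisely that argument. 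One caveat, inherited from the paper's own algebra rather than introduced by you: plugging these constants into $\frac{2B}{\mu(K+1)}$ with $B=c^2\bigl(c^2L^2+Var(\overline{g_k})\bigr)$ literally yields $\frac{2}{\mu(K+1)}\Bigl(\rho^4\kappa^4L^2+\frac{\rho^3\kappa^3(\sigma^2+\varepsilon^2)}{N}\Bigr)$, so the corollary's stated coefficients (lacking the factor $2$ and with $\rho\kappa$ where $\rho^3\kappa^3$ appears) do not follow exactly; the same constant-dropping occurs in the paper's final simplification of the unimproved $N$-sample theorem, so this discrepancy is not a gap in your approach.
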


We skip the proof since it is very similar to the proof of Theorem \ref{Th:main}. It should be noticed that although hard to compute, $\sigma^2$ and $\varepsilon^2$ are from the input which do not depend on the algorithm. In addition, it should be noticed that the convergence rate of our algorithm is determined by the approximation constant $\rho\kappa$ from XOR-sampling. The smaller this constant is, the lower the success rate of XOR-Sampling has. By setting proper parameter values, we can get $\rho\kappa=\sqrt{2}$. As a consequence, we can collect $N$ samples successfully by running XOR-Sampling around $40N$ times. The time complexity can be further reduced via parallel sampling. In addition, the sampling procedure is independent of the optimization step since $Pr(\theta)$ does not depend on $x$.

\section{EXPERIMENTS}\label{sec:exp}

We evaluate our (Improved) XOR-PGD algorithm on the same benchmark of XOR-SGD \citep{fan2021xorsgd}, the inventory management (\cite{ziukov2016literature,shapiro2007tutorial}) and the network design problems (\cite{sheldon2012maximizing,WuXSG17XORSampling,xiaojian2016}). For comparison, we consider XOR-SGD, and also those that use SGD methods with Gibbs Sampling, Belief Propagation (BP) (\cite{yedidia2001generalized,murphy2013loopy}), or Belief Propagation Chain (BPChain) (\cite{fan2020BPChainCD}). 
Similar to the setting in \cite{fan2021xorsgd}, for each setting of both applications, to produce a sample, we let Gibbs sampling have 100 burn in samples, and then draws one sample every 30 steps. We fix the number of iteration steps of both BP and BPChain as $20$, which is enough for belief propagation to converge.
We allow SGD with Gibbs sampling, BP and BPChain to draw more samples than both XOR-PGD and XOR-SGD for a fair comparison. For both applications, we use MRF as probabilistic models for $Pr(\theta)$. 
All experiments were conducted using single core architectures on Intel Xeon Gold 6126 2.60GHz machines with 96GB RAM and a wall-time limit of $10$ hours.
We use IBM ILOG Cplex 12.71 as the solver of NP oracle to produce each XOR sample for both XOR-PGD and XOR-SGD. Notice that the projection step in XOR-PGD is also sovled by Cplex. 
Once a solution $x$ is generated by 
either algorithm, we use an exact weighted counter ACE \cite{barton2016ace} to evaluate $\mathbb{E}_{\theta\sim Pr(\theta)} f(x, \theta)$ exactly. All objective values reported are from ACE.
Since we run each algorithm on one single core with a wall-time limit of $10$ hours for a fair comparison thus not all algorithms can complete all iterations, we report the best results found by each algorithm within the time limit.

\begin{table*}[ht]
    \centering
    \small
    \begin{tabular}{l|cccccccccc}
    \toprule
    & \multicolumn{10}{c}{Number of materials} \\
      \hline
      & 10 & 20  & 30 & 40 & 50 & 60 & 70 & 80 & 90 & 100 \\
      \hline
        savings/Gibbs &  8.57\% & 19.29\% & 13.43\% & 13.45\% & 14.48\% & 13.46\% & 11.6\% & 13.87\% & 17.31\% & 20.32\% \\
        savings/BP   &  22.89\% & 20.7\% & 17.21\% & 18.63\% & 19.52\% & 16.94\% & 14.94\% & 25.16\% & 30.7\% & 34.47\%  \\
        savings/BPChain & 6.57\% & 13.74\% & 12.17\% & 12.01\% & 9.19\% & 14.3\% & 12.71\% & 19.2\% & 24.89\% & 30.83\% \\
        savings/XOR-SGD & 1.54\% & 1.74\% & 5.31\% & 2.86\% & 1.21\% & 2.64\% & 2.24\% & 2.87\% & 5.38\% & 10.27\% \\
    \bottomrule
    \end{tabular}
    \caption{The percentage of savings of the solutions found by Improved XOR-PGD ((I)XOR-SGD) in Algorithm \ref{alg: improved XOR-PGD} against other methods on 100\% storage limit varying the number of materials. We can see the Improved XOR-PGD on average saves 10\% cost against the first three methods and 2\% against XOR-SGD, a naive method coupled with XOR-Sampler.}
    \label{tab:saving}
    \vspace{0.1in}
\end{table*}

\begin{table*}[ht]
    \centering
    \small
    \begin{tabular}{l|cccccccccc}
    \toprule
    & \multicolumn{10}{c}{Number of materials} \\
      \hline
      & 10 & 20  & 30 & 40 & 50 & 60 & 70 & 80 & 90 & 100 \\
      \hline
        XOR-SGD   &  98.3\% & 97.3\% & 95.3\% & 94.1\% & 93.4\% & 91.6\% & 89.6\% & 87.3\% & 83.4\% & 80.9\%  \\
        XOR-PGD & 99.4\% & 99.3\% & \textbf{98.5\%} & \textbf{98.1\%} & 97.4\% & \textbf{96.6\%} & 95.2\% & 93.4\% & \textbf{92.9\%} & 91.3\% \\
        (I)XOR-PGD & \textbf{99.6\%} & \textbf{99.4\%} & 98.3\% & 97.6\% & \textbf{97.5\%} & 96.4\% & \textbf{95.9\%} & \textbf{93.5\%} & 91.5\% & \textbf{91.7\%} \\
    \bottomrule
    \end{tabular}
    \caption{The average rate of constraints satisfaction of 1000 solutions found by both  XOR-PGD and (I)XOR-SGD against XOR-SGD on 100\% storage limit and with 60 samples each iteration varying the number of materials. We can see solutions found by both of our methods have more than $90\%$ rate of constraints satisfaction even when the searching space is very large, which is $10\%$ more than that from XOR-SGD.}
    \label{tab:constraints}
    \vspace{0.1in}
\end{table*}

\begin{figure*}[t]
\centering
\subfigure{\label{fig:inventory_budget}
\includegraphics[width=0.35\linewidth]{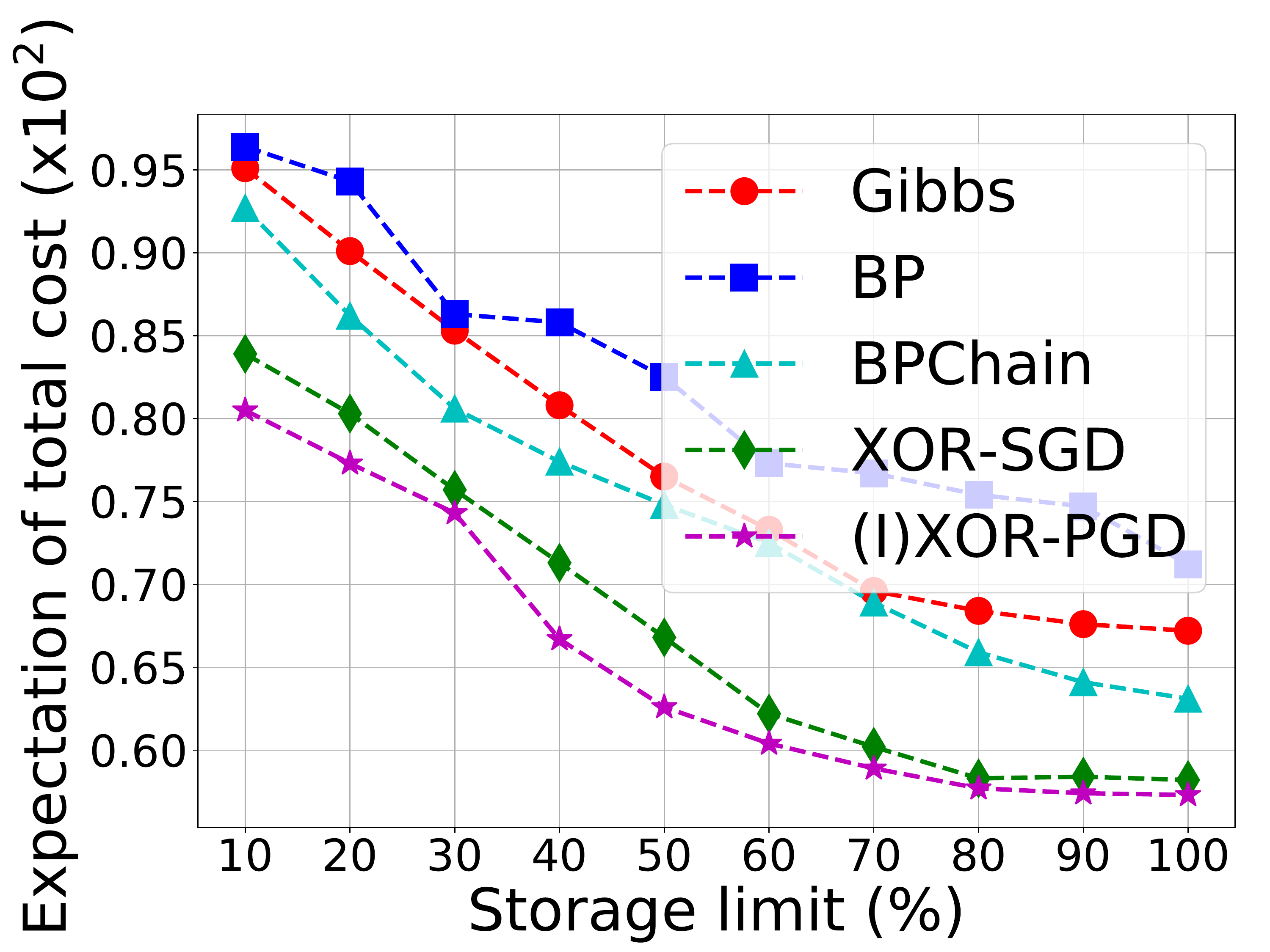}}
% \hspace{-0.2in}
\subfigure{\label{fig:inventory_sample}
\includegraphics[width=0.35\linewidth]{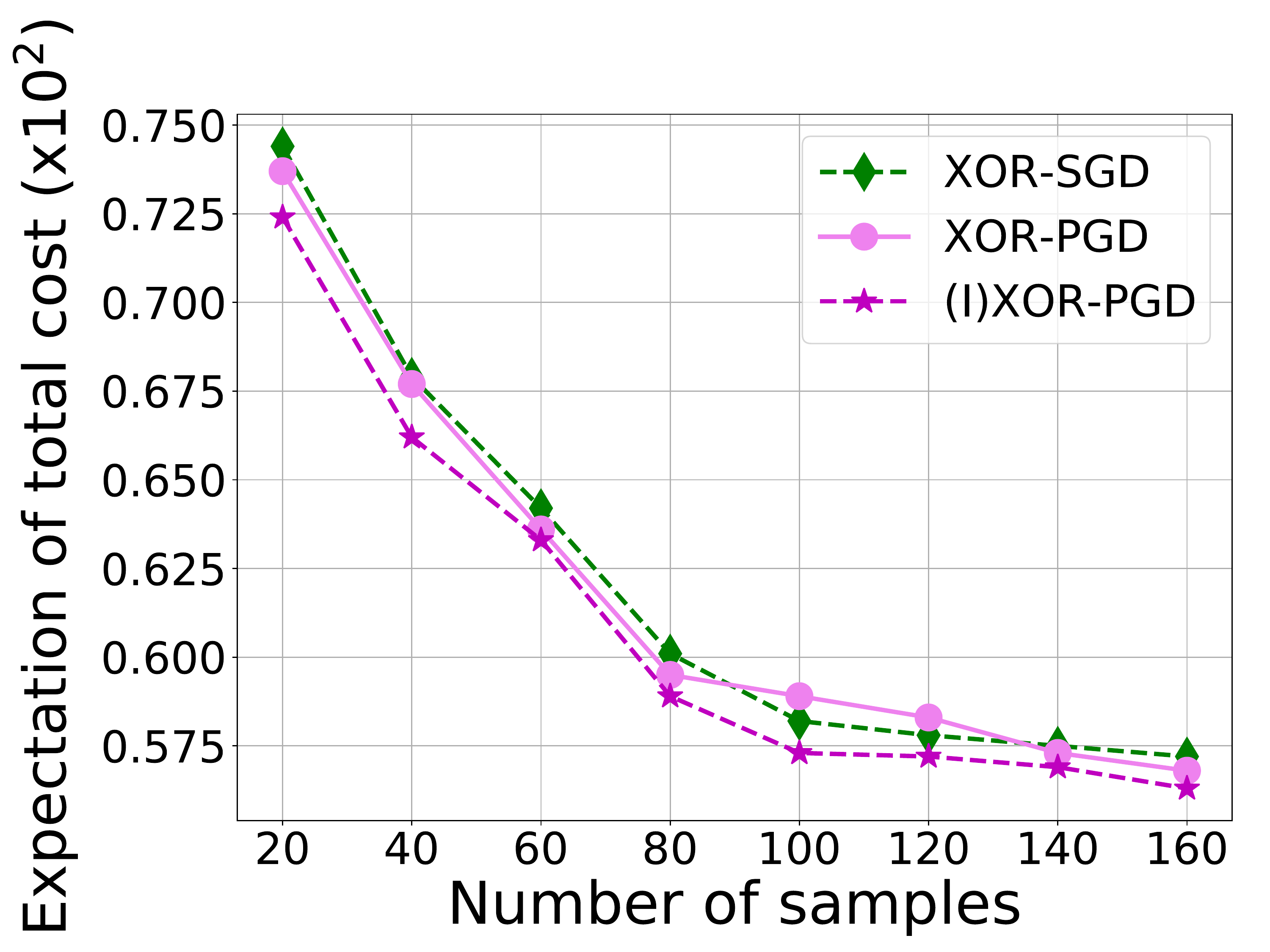}}
\vspace{0.1in}
\subfigure{\label{fig:bar_network}
\includegraphics[width=0.32\linewidth]{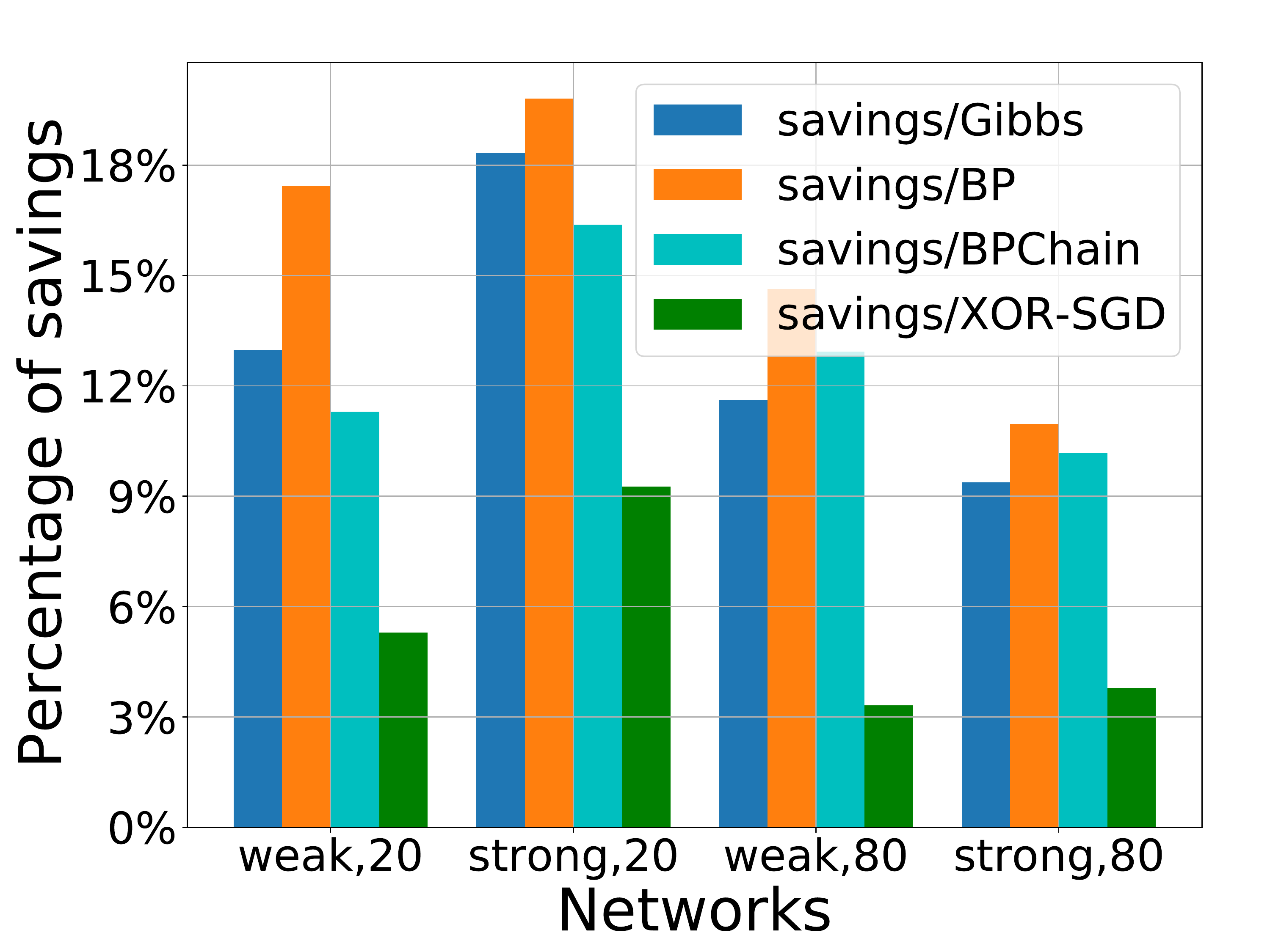}}
\subfigure{\label{fig:network_budget}
\includegraphics[width=0.32\linewidth]{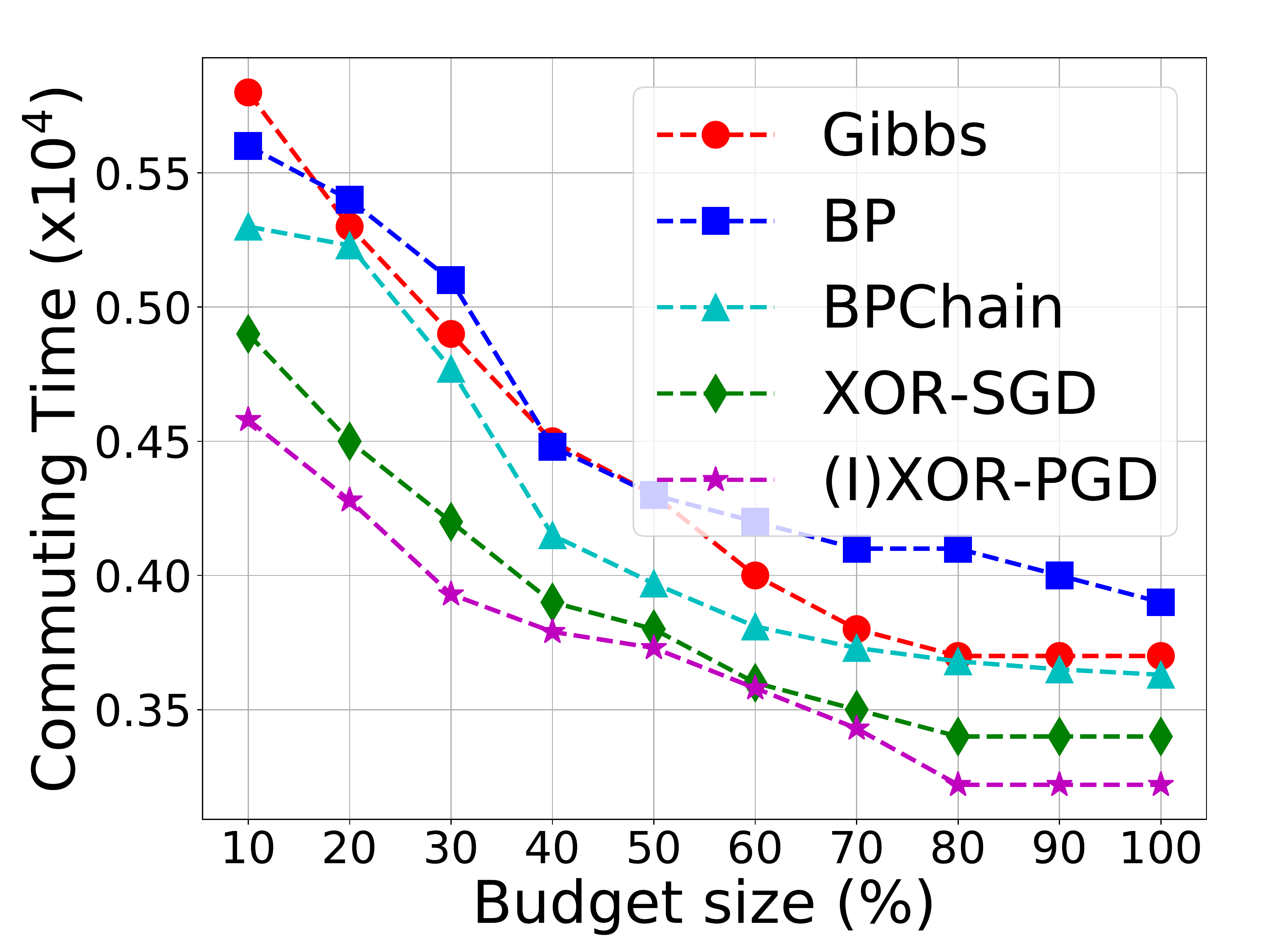}}
% \hspace{-0.2in}
\subfigure{\label{fig:network_sample}
\includegraphics[width=0.32\linewidth]{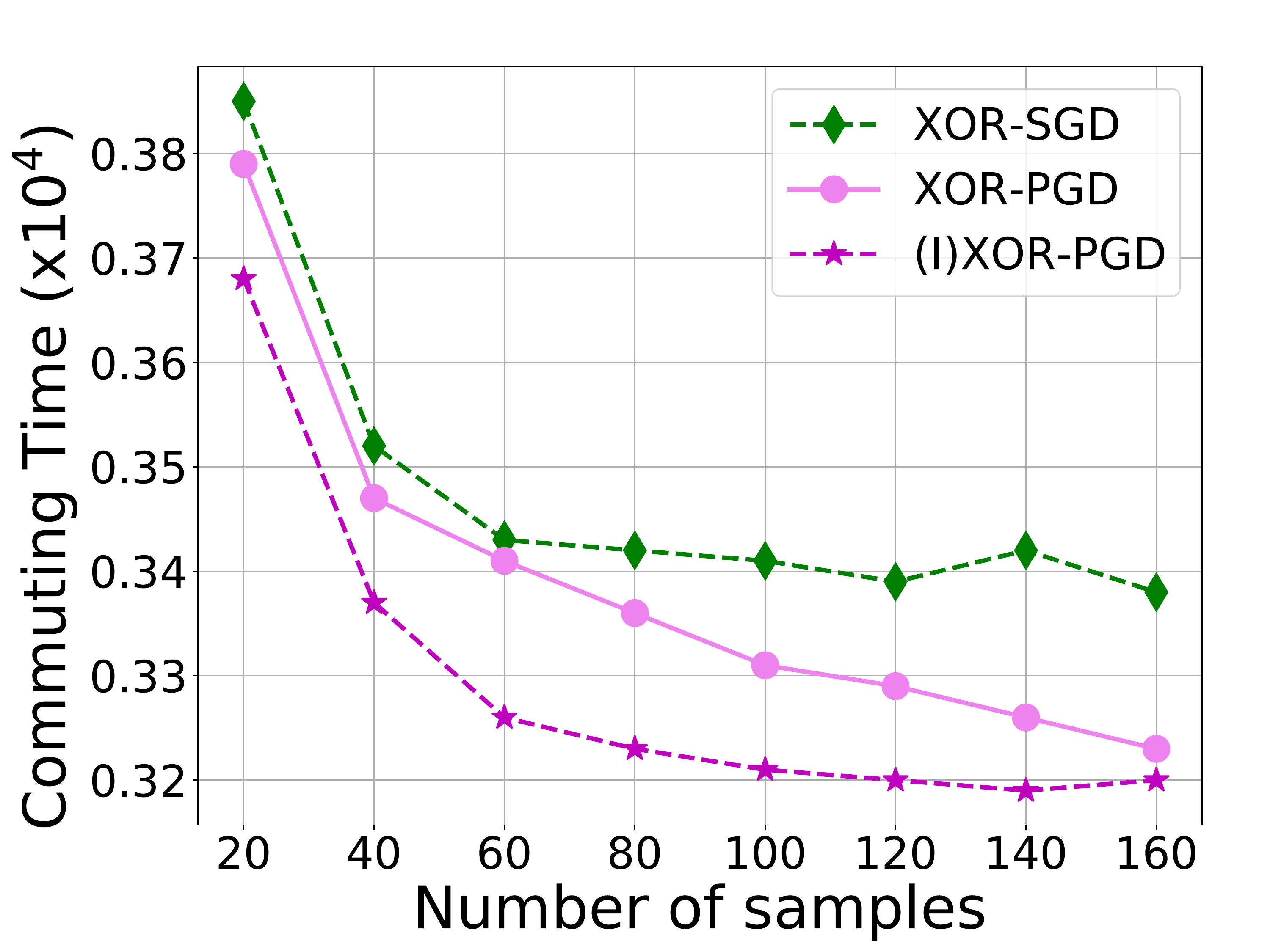}}
% \vspace*{-0.3cm}
\caption{ The upper row is experimental results on the inventory management problem and the bottom row is the results on the network design problem. The Improved XOR-PGD ((I)XOR-PGD) is better than the baselines in all cases. \textbf{(Upper left)} The objective values found by all methods with 50 materials varying storage limits. our method always finds better solutions than the other approaches. \textbf{(Upper right)} The objective values found by all methods with different number of samples for approximation in each iteration with 100\% storage limit and 50 materials. \textbf{(Bottom left)} The percentage of savings of (I)XOR-PGD against other methods with  100\% budget. (I)XOR-PGD saves on average $> 3\%$  commuting time on all 4 benchmarks.  \textbf{(Bottom middle)} Commuting time of the solutions found by (I)XOR-PGD and baselines varying budgets on network ``weak 20''. (I)XOR-PGD always leads to the least commuting time no matter how much the budget is given. \textbf{(Bottom right)} Commuting time found by the three methods with different number of samples (100\% budget, ``weak 20''). Both XOR-PGD and (I) XOR-PGD outperforms XOR-SGD with less XOR samples.}
\vspace*{0.1in}
\label{fig:exp}
\end{figure*}

\subsection{Stochastic Inventory Management}\label{subsec:inventory}
%\subsection{\textbf{Stochastic Inventory Management}~~}
% \noindent\textbf{Stochastic Inventory Management}
We first investigate our algorithm on the  stochastic inventory management problem studied in \cite{shapiro2007tutorial}.
Assuming there are $n$ materials. The demand of material $i$ is $d_i$. Let $d=(d_1, \ldots, d_n)^T$
be the demand vector. 
The manager stocks $x_i$ amount of material $i$ at the beginning of the season. Each unit of material $i$ takes storage space $w_i$, and the total amount of pre-order is limited by the available storage space $X$. 
At the end of the production season, demand $d$ will be revealed to
the the manager.
We assume the cost of ordering the $i$-th material is $c_i$ per unit. If the demand $d_i>x_i$, then a back order is needed, of which one unit costs $b_i\geq c_i$. Overall, the cost for back order is $b_i(d_i-x_i)$ if $d_i > x_i$, and is zero otherwise. On the other hand, if $d_i < x_i$, then a holding cost of $h_i$ per unit is incurred, leading to an additional total cost $h_i(x_i-d_i)$. Summing it up, the cost for material $i$ is $G_i = c_ix_i + b_i[d_i-x_i]^+ + h_i[x_i-d_i]^+$ where $[a]^+$ denotes the maximum of $a$ and 0. Then,
the total cost will be $G(x,d)=\sum_{i=1}^{n}G_i$. The manager want to minimize his operational cost, which translates to this problem:
\begin{align}\label{eq:inventory}
    \min_{x\geq 0}~~\mathbb{E}_{d \sim Pr(d)}[G(x,d)], \quad s.t. \quad w^T x \leq X.
\end{align}
where $G(x,d)$ is convex w.r.t. $x$.
We run the experiments varying the number of materials $n$, 
the storage limit, and the number of samples we use in XOR-PGD and other methods. Parameters and experimental settings are the same as in \cite{fan2021xorsgd}.
In terms of the parameters in XOR-Sampling we fix $P=100, b=7, \epsilon=0.01$ and the others the same as in \cite{ermon2013embed} to guarantee $\rho\kappa=\sqrt{2}$. Learning rate $t$ is $0.1$ at first and divided by $10$ after $50$ iterations, then further divided by $10$ after $100$ iterations. $\eta$ is 10 at first and divided by $10$ after $50$ iterations, then further divided by $10$ after $100$ iterations. The total number of both $K$ and $M$ are set to be $200$.
However, since we run each algorithm on one single core with a wall-time limit of $10$ hours for a fair comparison, not all algorithms can complete all iterations. The plots are based on the best results found by each algorithm within the time limit.

Table \ref{tab:saving} shows that our algorithm outperforms other methods varying number of materials on the percentage reduction of the objective values of the solutions. In math form, for example for Gibbs Sampling, the metric is $(obj(\mbox{Gibbs}) - obj(\mbox{(I)XOR-PGD})) / obj(\mbox{Gibbs})$ (metrics for other approaches are analogous). The objective optimized by (I)XOR-PGD is on average 2 percent better than XOR-SGD and $10\%$ more than that optimized by other baselines. In addition, Table \ref{tab:constraints} shows that the rate of constraints satisfaction of the solutions from either XOR-PGD or (I) XOR-PGD is $10\%$ more than that of XOR-SGD even in very large searching space.
The first row in Figure \ref{fig:exp} shows the objective values of solutions varying the storage limit, and the objective values varying the number of samples with $100\%$ storage limit and 50 different materials. 
We can see from the left figure that with the storage limit increasing, (I)XOR-PGD is always better than all baselines, and from the right figure that (I)XOR-PGD found better solutions with less samples than XOR-SGD. Since \cite{fan2021xorsgd} already reported that XOR-SGD is faster than the other competing approaches, here by comparing efficient sample size with XOR-SGD we conclude that (I)XOR-PGD is faster than all the other methods. 

\subsection{Stochastic Network Design}\label{subsec:network}
We then consider the expected commuting time of a random walk
in network optimization  \cite{ghosh2008minimizing,McClure2016ConnectingMT,Inman2013DevelopingPF}, which is also a benchmark in \cite{fan2021xorsgd}.
Given an undirected graph $G=(V,E)$, where $|V|=m, |E|=n$.
Let $g = (g_1, \dots, g_n)^T$, and edge $e\in E$ is associated with a binary random variable $\theta_e$ that describes the state of the edge during disasters. $\theta_e=0$ means that the edge is destroyed, and 1 otherwise. Let $\theta=(\theta_1, \dots, \theta_n)$.
It will take money $c_e$ to increase one unit of $g_e$ and we have a total budget of $B$. 
Then, From \cite{ghosh2008minimizing} we know the commuting time
\begin{align*}
    \overline{C}(g, \theta)=\frac{4(\mathbf{1}^Tg)}{(m-1)}\Big{(}\mathbf{Tr}(L+\mathbf{1}\mathbf{1}^T/m)^{-1}-1\Big{)}
\end{align*}
which is convex w.r.t. $g$, where $L$ is the weighted Laplacian matrix.
We would like to find the best network improvement plan under the given budget, which minimizes the expected commuting time averaged over all stochastic events to maximizes the network connectivity. Mathematically, our problem can be formulated as the following problem: 
\begin{align*}
    \min_{\Delta g\geq 0}~\mathbb{E}_{\theta\sim Pr(\theta)}[\overline{C}(g+\Delta g, \theta)], %=&\min_{g}\sum_{\theta\in\{0,1\}^m}\overline{C}(\theta)
    ~ s.t. ~ \sum_{e\in E} c_e \Delta g_e\leq B.
    %, \mbox{ and } \Delta g_e \geq 0.
\end{align*}
We evaluate our algorithms on the Flood Preparation problem for the emergency medical services (EMS) on road networks as studied in \cite{xiaojian2016} and test our algorithm on four benchmarks involving the weak and the strong network originally evaluated in \cite{fan2021xorsgd} varying the percentage of the largest budget size and samples size, which is shown in the bottom row of Figure \ref{fig:exp}. The largest budget size $B$ is $1000$. Total number of SGD iterations is 2000, while not all algorithms can complete all 2000 iterations within the time limit of 10 hours. Parameters in XOR-Sampling are set to be the same as in \cite{fan2021xorsgd}. 

The results are similar to those for the inventory management problem, where results clearly show that (I)XOR-PGD outperforms other methods both in efficiency and in the quality of solutions. 
On all of the four different type of networks (I)XOR-PGD saves on average $>3\%$ against XOR-SGD and more than $9\%$ against the other competing methods.
In addition, we would like to emphasize that (I)XOR-PGD with 40 samples already outperforms XOR-SGD with 60 samples in the right figure while finds better solutions at the same time, and even naive XOR-PGD finds better solutions than XOR-SGD given the same sample size.

The left figure in Figure 3 shows the percentage of savings between SGD with other sampling methods and XOR-SGD among all of the 4 different networks, while the middle and the right figures show the averaged commuting time with regard to different budget sizes and different number of samples, respectively. For the left and the middle figures, we let XOR-SGD take 100 samples in each iteration while SGD with other methods take 10,000. We can see from the left figure that objective optimized by XOR-SGD is at least 5\% better than that optimized by other methods for all the 4 different networks. In addition, from the middle and the right figures we know that with the increase of either budget size or the number of samples, our method can find consistently better solutions than the compared methods. In particular, from the right figure we can see even 40 samples in each iteration are enough for XOR-SGD to compete with the result from Gibbs with 20,000 samples. Meanwhile, XOR-SGD also runs faster than the compared method under this situation. In this experiment, XOR-SGD with 40 samples take 1 minutes 40 seconds per SGD iteration, while SGD with 20,000 Gibbs samples need 2.5 minutes per iteration. 
Since sampling time of both BP and BPChain is no shorter than Gibbs Sampling, we thus conclude that XOR-SGD outperforms other methods both in efficiency and in the quality of solutions found. 
\vspace{-0.3cm}
\section{CONCLUSION}\label{sec:conclusion}

We proposed XOR-PGD, a provable algorithm to attack constrained convex stochastic optimization problems, which are crucial for many decision-making applications with uncertainty. 
We showed theoretically that our algorithm has a linear convergence rate to the global optimum by chossing proper step sizes. Empirically, we demonstrated the superior performance of XOR-PGD on both the stochastic inventory management and the stochastic network design problems. 
In particular, 90\% solutions obtained by XOR-PGD satisfy the constraints set $C$ even when the searching space is very large in the inventory management problem, approximately 10\% more that of competing methods. Besides, XOR-PGD converges faster than XOR-SGD by accessing 20\% less XOR samples in each iteration and is able to find better solutions in the stochastic network design problem.
Overall, our paper demonstrates the power of integrating
cutting-edge computer science technology with real-world  problems.
Our paper will also stimulate further academic progress in stochastic optimization, constrained optimization, probabilistic inference 
with hashing and randomization, and non-convex optimizations with insights from real-world applications. 
Future work includes tightening the constant bound and accelerating the convergence rate. We will keep active to investigate if our approach can motivate new algorithms for non-convex stochastic optimization problems.

% \begin{contributions} % will be removed in pdf for initial submission,
%                       % so you can already fill it to test with the
%                       % ‘accepted’ class option
%     Briefly list author contributions.
%     This is a nice way of making clear who did what and to give proper credit.

%     H.~Q.~Bovik conceived the idea and wrote the paper.
%     Coauthor One created the code.
%     Coauthor Two created the figures.
% \end{contributions}

% \begin{acknowledgements} % will be removed in pdf for initial submission,
%                          % so you can already fill it to test with the
%                          % ‘accepted’ class option
%     Briefly acknowledge people and organizations here.

%     \emph{All} acknowledgements go in this section.
% \end{acknowledgements}

\clearpage
\bibliography{fan}

\clearpage
\appendix
% % NOTE: necessary when ptmx or no mathfont class option is given
\section*{Appendix}

\section{XOR-Sampling for the Weighted Case}\label{app:weighted_case}
The text here provides a synopsis for the approach in \cite{ermon2013embed}. We still encourage the readers to read the original text for a better explanation. 
Let $w(\theta), p(\theta)$ and $Z$ as defined before, the high-level idea of XOR-Sampling is to first dicretize $w(\theta)$ to $w'(\theta)$ as in Definition \ref{def:discretize}, followed by embedding the weighted $w'(\theta)$  to  the unweighted space $\Delta_w$. Finally, XOR-sampling uses counting based on hashing and randomization to  sample uniformly from $\Delta_w$. %, by querying NP-oracles.

\begin{Def}\label{def:discretize}
Assume $w(\theta)$ has both upper and lower bound, namely, $M=\max_{\theta} w(\theta)$ and $m=\min_{\theta} w(\theta)$. Let $b\geq 1, \epsilon>0, r=2^b/(2^b-1)$ and $l=\lceil \log_r(2^n/\epsilon)\rceil$. Partition the configurations into the following weight based disjoint buckets: $\mathcal{B}_i=\{\theta|w(\theta)\in(\frac{M}{r^{i+1}}, \frac{M}{r^{i}}]\}, i=0,\ldots, l-1$ and $\mathcal{B}_l=\{\theta|w(\theta)\in(0,\frac{M}{r^l}]\}$. The discretized weight function $w': \{0,1\}^n\rightarrow\mathbb{R}^+$ is defined as follows: $w'(\theta)=\frac{M}{r^{i+1}}$ if $\theta\in\mathcal{B}_i, i=0,\ldots, l-1$ and $w'(\theta)=0$ if $\theta\in\mathcal{B}_l$. This leads to the corresponding
discretized probability distribution $p'(\theta)=w'(\theta)/Z'$ where $Z'$ is the normalization constant of $w'(\theta)$.
\end{Def}

For the weighted case, the goal of XOR-sampling is to guarantee that the probability of sampling one $\theta$ is proportional to the unnormalized density (up to a multiplicative constant). By Definition \ref{def:discretize}, we obtain a distribution $p'(x)$ which satisfying $\frac{1}{\rho}p(x)\leq p'(x)\leq \rho p(x)$ where $\rho=\frac{r^2}{1-\epsilon}$. Then, XOR-sampling implements a horizontal slice technique to transform a weighted problem into an unweighted one. For the easiness of illustration, we denote $M'=\max_{\theta}w'(\theta)$ and $m'$ as the smallest non-zero value of $w'(\theta)$. Then consider the simple case where $b=1$ and $r=2$, where we have $M'=2^{l-1} m'$. Let $\delta=(\delta_0, \ldots, \delta_{l-2})^T\in\{0,1\}^{l-1}$ be a binary vector of length $l-1$, XOR-sampling samples $(\theta, \delta)$ uniformly at random from the following set $\Delta_w$ using the unweighted version of sampling based on hashing and randomization:
\begin{align}
    \Delta_w=\{(\theta,\delta): w'(\theta)\leq 2^{i+1}m'\Rightarrow\delta_i=0\}.
\end{align}
If we sample $(\theta,\delta)$ uniformly at random from $\Delta_w$ and then only return $\theta$, it can be proved that the probability of sampling $\theta$ from $w'(\theta)$ is proportional to $m'2^{i-1}$ when $w(\theta)$ is sandwiched between $m'2^{i-1}$ and $m'2^i$. Therefore, this technique leads to the constant approximation guarantee of XOR-Sampling.
The precise statement of the guarantee is in Theorem 2. For general case of $b$ and $r$, please refer to \cite{ermon2013embed}.
%, the guarantee of constant bound on the expectation over $Pr(\theta)$, which is shown in . 

%Compared to MCMC samplers, this property allows XOR-Sampling to obtain an unbiased estimation without drawing a large number of samples.

\noindent\textbf{Setting $\epsilon\eta_{\phi}$ to Zero}~~~~
In Definition \ref{def:discretize} we can make $b$ larger and $\epsilon$ smaller enough, then there will be a possibly large but finite value of $l$ such that $\frac{M}{r^l}$ is smaller than $m$ , which leads $\mathcal{B}_l$ to be empty and $\epsilon\eta_{\phi}$ to be  zero.

\section{Proofs}\label{app:proofs}
\subsection{Proof of Lemma 1} %\ref{lm:XOR_bound}}
We define two functions $g_k^+=\max\{g_k,\mathbf{0}\}$ and $g_k^-=\min\{g_k,\mathbf{0}\}$ where $\mathbf{0}$ is a vector of all 0 which has the same dimension as $g_k$. We have $g_k=g_k^+ + g_k^-$. We define both $\nabla f(x_k)^+$ and $\nabla f(x_k)^-$ in the similar way. Then Lemma 1 gives the new bounds of two terms assuming the constant bound on the gradient, which are essential to the proof of convergence rate. The proof of Lemma 1 is as follows:

% \subsection{Proof of Lemma \ref{lm:XOR_bound}}\label{app:proof_lm_XOR}
\begin{proof} (Lemma 1)
Since we have the constant bound that
\begin{align}
    \frac{1}{c}\nabla f(x_k)^+ & \leq \mathbb{E}[g_k^+]\leq c\nabla f(x_k)^+.\label{eq:pos1}\\
    c\nabla f(x_k)^- & \leq \mathbb{E}[g_k^-]\leq \frac{1}{c}\nabla f(x_k)^-.\label{eq:neg1}
\end{align}
and because of $g_k^+\geq \mathbf{0}$ and $g_k^-\leq \mathbf{0}$ we can obtain
\begin{align*}
     \frac{1}{c}||\mathbb{E}[g_k^+]||_2^2&=\frac{1}{c}\langle \mathbb{E}[g_k^+], \mathbb{E}[g_k^+]\rangle \leq \langle\nabla f(x_k)^+, \mathbb{E}[g_k^+]\rangle \\
     &\leq c\langle \mathbb{E}[g_k^+], \mathbb{E}[g_k^+]\rangle = c||\mathbb{E}[g_k^+]||_2^2.\\
     \frac{1}{c}||\mathbb{E}[g_k^-]||_2^2&=\frac{1}{c}\langle \mathbb{E}[g_k^-], \mathbb{E}[g_k^-]\rangle \leq \langle\nabla f(x_k)^-, \mathbb{E}[g_k^-]\rangle\\
     &\leq c\langle \mathbb{E}[g_k^-], \mathbb{E}[g_k^-]\rangle = c||\mathbb{E}[g_k^-]||_2^2.
\end{align*}
which exactly means
\begin{align*}
     \frac{1}{c} ||\mathbb{E}[g_k]||_2^2 &\leq \langle\nabla f(x_k), \mathbb{E}[g_k]\rangle  \leq c||\mathbb{E}[g_k]||_2^2.
\end{align*}
To prove the second inequality, we need to take advantage of the convexity of $f$. Denote $[x_k-x^*]^+=\max\{x_k-x^*,\textbf{0}\}$ and $[x_k-x^*]^-=\min\{x_k-x^*,\textbf{0}\}$, we know $x_k-x^*=[x_k-x^*]^+ + [x_k-x^*]^-$. In addition, because $f$ is convex, the index set of non-zero entries of $[x_k-x^*]^+$ and $\nabla f(x_k)^+$ is the same. The index set of non-zero entries of $[x_k-x^*]^-$ and $\nabla f(x_k)^-$ is also the same. %\xyx{also need the sign of $g_k$ and $x_k-x^*$ to be the same to derive the next two equations, right? Can you add those?} 
In addition, because of Equation~\ref{eq:pos1} and \ref{eq:neg1}, the index set of non-zero entries of $\mathbb{E}[g_k^+]$ ($\mathbb{E}[g_k^-]$) is the same with 
$\nabla f(x_k)^+$ ($\nabla f(x_k)^-$). 
Combining these facts with Equations \ref{eq:pos1} and \ref{eq:neg1}, we have
\begin{align*}
    \frac{1}{c}\langle\mathbb{E}[g_k^+],[x_k-x^*]^+\rangle  &\leq  \langle\nabla f(x_k)^+,[x_k-x^*]^+\rangle\\
    &\leq c\langle\mathbb{E}[g_k^+],[x_k-x^*]^+\rangle.\\
    \frac{1}{c}\langle\mathbb{E}[g_k^-],[x_k -x^*]^-\rangle  &\leq  \langle\nabla f(x_k)^-,[x_k-x^*]^-\rangle \\
    &\leq c\langle\mathbb{E}[g_k^-],[x_k-x^*]^-\rangle.
\end{align*}
Combining these two equations, we have 
\begin{align*}
    \frac{1}{c}\langle\mathbb{E}[g_k],x_k-x^*\rangle  &\leq  \langle\nabla f(x_k),x_k-x^*\rangle  \leq c\langle\mathbb{E}[g_k],x_k-x^*\rangle.
\end{align*}
This completes the proof.
\end{proof}

\subsection{Proof of Theorem 4}\label{app:proof_main}
% \subsection{Proof of Theorem \ref{Th:main}}\label{app:proof_main}
\begin{Th}\label{Th:main}
(Main) Let $b,\epsilon,l,\delta,P,\alpha,\rho,\kappa$ and $\mathcal{B}_l$ be as in section \ref{app:weighted_case} in appendix, function $f(x,\theta): \mathbb{R}^d\times\{0,1\}^n\rightarrow \mathbb{R}$ be a $L$-smooth convex function w.r.t. $x$. Denote $OPT=\min_x \mathbb{E}_{\theta\sim Pr(\theta)}f(x,\theta)$ as the global optimum. 
%, and $x_{k+1}=x_{k}-t\overline{g_k}$ where $\overline{g_k}$ is the mean of samples $g_k^i=\nabla f(x_k,\theta_i)$. 
Let $\sigma^2=\max_x\{Var(\nabla_x f(x,\theta))\}$ and $\varepsilon^2=\max_x\{||\mathbb{E}[\nabla_x f(x, \theta)]||_2^2\}$. 
For any $1\leq\rho\kappa\leq\sqrt{2}$, step size $t\leq \frac{2-\rho^2\kappa^2}{L\rho\kappa}$ and sample size $N\geq1$, $\overline{x_K}$ is the output of XOR-SGD and $\mbox{obj} = \mathbb{E}_{\theta}[f(\overline{x_K}, \theta)]$ is the objective function value at $\overline{x_K}$. We have: 
\begin{align}\label{eq:main}
    \mathbb{E}_{\overline{x_K}}[\mbox{obj}] - OPT
     &\leq\frac{\rho\kappa||x_0-x^*||_2^2}{2tK}+\frac{t(\sigma^2+\varepsilon^2)}{N}.
    %+\frac{t\sigma^2}{N}+\frac{t\varepsilon^2}{N}\Big{(}1-\frac{1}{\rho^3\kappa^3}\Big{)}
\end{align}
\end{Th}

\begin{proof}(Theorem 4)
Since we use $N$ samples at each iteration, we have $\overline{g_k}=\frac{1}{N}\sum_{i=1}^Ng_k^i$ and $\mathbb{E}[\overline{g_k}]=\mathbb{E}[g_k^i]$.
In each iteration $k$ we can adjust the parameters in XOR-Sampling to make the tail $\epsilon\eta_{\phi}$ zero, then for each sample $g_k^i$ we can obtain from Theorem 2 that
\begin{align}
    \frac{1}{\rho\kappa} \mathbb{E}_\theta [\nabla f(x_k, \theta)]^+ &\leq \mathbb{E}[g_k^{i+}]\leq \rho\kappa\mathbb{E}_\theta [\nabla f(x_k, \theta)]^+.\label{eq:pos2}\\
    \rho\kappa \mathbb{E}_\theta [\nabla f(x_k, \theta)]^- &\leq \mathbb{E}[g_k^{i-}]\leq  \frac{1}{\rho\kappa}\mathbb{E}_\theta [\nabla f(x_k, \theta)]^-.\label{eq:neg2}
\end{align}
The variance of each sample $g_k^i$ can also be bounded by
\begin{align*}
    &Var(g_k^i) \\
    &= \mathbb{E}_{\theta' \sim p'(\theta')} [||\nabla f(x_k, \theta')||_2^2] - ||\mathbb{E}_{\theta' \sim p'(\theta')} [\nabla f(x_k, \theta')]||_2^2,\\
    &\leq \rho\kappa\mathbb{E}_{\theta \sim p(\theta)} [||\nabla f(x_k, \theta)||_2^2],\\
    &= \rho\kappa (Var(\nabla f(x_k,\theta)) + ||\mathbb{E}_{\theta \sim p(\theta)} [\nabla f(x_k, \theta)]||_2^2),\\
    &\leq \rho\kappa(\sigma^2 +\varepsilon^2).
\end{align*}
Denote $\overline{g_k}^+=\max\{\overline{g_k},\textbf{0}\}$ and $\overline{g_k}^-=\min\{\overline{g_k},\textbf{0}\}$. 
Clearly, $g_k^{i+} \geq 0$ and $g_k^{i-} \leq 0$. Moreover,
for a given dimension, either $g_k^{i+}=0$ for that dimension or $g_k^{i-}=0$. 
Evaluating $\overline{g_k}$ dimension by dimension, we can see that
$\overline{g_k}^+=\frac{1}{N}\sum_{i=1}^N g_k^{i+}$ and $\overline{g_k}^-=\frac{1}{N}\sum_{i=1}^N g_k^{i-}$.
Combined with Equation~\ref{eq:pos2} and \ref{eq:neg2}, we know 
\begin{align*}
    \frac{1}{\rho\kappa} \mathbb{E}_\theta [\nabla f(x_k, \theta)]^+ &\leq \mathbb{E}[\overline{g_k}^+]\leq \rho\kappa\mathbb{E}_\theta [\nabla f(x_k, \theta)]^+.\\
    \rho\kappa \mathbb{E}_\theta [\nabla f(x_k, \theta)]^- &\leq \mathbb{E}[\overline{g_k}^-]\leq  \frac{1}{\rho\kappa}\mathbb{E}_\theta [\nabla f(x_k, \theta)]^-.
\end{align*}

Because $\mathbb{E}[\overline{g_k}]=\mathbb{E}[g_k^i]$, we also have
\begin{align*}
    Var(\overline{g_k})=\frac{1}{N^2}Var(\sum_{i=1}^Ng_k^i)=\frac{Var(g_k^i)}{N}.
\end{align*}
Then the variance of $\overline{g_k}$ can be bounded as
\begin{align*}
    Var(\overline{g_k})&\leq \frac{\rho\kappa(\sigma^2+\varepsilon^2)}{N}.
\end{align*}

Therefore, we can then apply Theorem 3 to get the result in equation 5.
\begin{align*}
     &\mathbb{E}_{\overline{x_K}}[\mathbb{E}_{\theta}[f(\overline{x_K}, \theta)]]  -\mathbb{E}_{\theta}[f(x^*,\theta)]\\
     &\leq \frac{\rho\kappa||x_0-x^*||_2^2}{2tK}+\frac{t\max_k\{Var(\overline{g_k})\}}{\rho\kappa},\\
     &\leq\frac{\rho\kappa||x_0-x^*||_2^2}{2tK}+\frac{t(\sigma^2+\varepsilon^2)}{N}.
\end{align*}
which can also be written as
\begin{align}
    \mathbb{E}_{\overline{x_K}}[\mbox{obj}] - OPT
     &\leq\frac{\rho\kappa||x_0-x^*||_2^2}{2tK}+\frac{t(\sigma^2+\varepsilon^2)}{N}.
\end{align}
This completes the proof.
\end{proof}

\section{Experiments}\label{app_exp}
We evaluate our XOR-SGD algorithm on the inventory management \cite{ziukov2016literature,shapiro2007tutorial} and the network design problems \cite{sheldon2012maximizing,WuXSG17XORSampling,xiaojian2016}.
For each setting of both applications, to produce a sample, Gibbs sampling first takes 100 steps to burn in, and then draws samples every 30 steps. We fix the iteration step of both BP and BPChain as $20$, which is enough for BP to converge.
We allow SGD with Gibbs sampling, BP and BPChain to draw more samples than XOR-SGD for a fair comparison. 
All experiments were conducted using single core architectures on Intel Xeon Gold 6126 2.60GHz machines with 96GB RAM and a wall-time limit of $10$ hours. 
For both applications, we use MRF as probabilistic models for $Pr(\theta)$, which can be seen in the next section.
For a fair comparison, once a solution $x$ is generated by 
either algorithm, we use an exact weighted counter ACE \cite{barton2016ace} to evaluate $\mathbb{E}_{\theta\sim Pr(\theta)} f(x, \theta)$ exactly. All objective values reported here are from ACE.

\subsection{Settings of Stochastic Inventory Management}\label{app:exp_setting_inventory}
%Because of the storage limit $X$ for different storage requirement $w_i$ of the $i$-th material, 
Taking into account of the storage constraint, the original problem is equivalent to the following problem:
\begin{align}\label{eq:inventory_dual}
    \min_{x\geq 0}\max_{\mu\geq 0}\mathbb{E}_{d\sim Pr(d)}[G(x,d)]+\mu(w^{T}x-X).
\end{align}

For inventory management problem, we assume each $d_i$ can take two different values, one corresponding to the high demand one corresponding to the low demand. Then, we introduce a new vector $\theta$ where $\theta_i=1$ means $d_i$ is the high value while $\theta_i=0$ otherwise. 
%We  model $Pr(\theta)$ using an MRF. %by equation \ref{eq:mrf}, rather than model $Pr(d)$.
%
%Because $G(x,d)$ is convex and the $w^Tx-X$ is linear, the optimal solution of the problem is the same. 
%
In the experiment we range $n$ from 10 to 100 increased by a step size of 10 and draw 10 instances for each setting. Under each setting, we draw every $c_i$ uniformly from $(0,5]$, $h_i$ uniformly from $(0,10]$, sample $s_i$ uniformly drawn from $(0,10]$ and let $b_i=c_i+s_i$. 
The two values of each $d_i$ are also uniformly drawn from $(0,10]$. 
%and 
%We model To model $Pr(d)$, we replace $Pr(d)$ with $Pr(\theta)$ by introducing a binary random vector $\theta$, where $\theta_i=1$ means $d_i$ choose the larger value and $\theta_i=0$ otherwise. 
We model $Pr(\theta)$ as a MRF with several cliques. The variables in each clique are highly correlated with each other. 
For a problem with $n$ products, we draw the number of cliques uniformly from $[n,2n]$. The domain size of each clique $\phi_{\alpha}$ is chosen from the range of $[1,6]$ at random. The potential function of a clique  involving $l$ variables is in the form of a table of size $2^l$. 
The $i$-th entry of this table, denoted as $v_i$, is modeled as $v_i=v_{i1}+v_{i2}v_{i3}$, where $v_{i1}$ is uniformly drawn from $(0,1)$, $v_3$ uniformly from $(10,1000)$ and binary variable $v_{i2}$ uniformly randomly drawn from $\{0,1\}$. 
Each storage requirement $w_i$ is drawn from $(0,10]$ uniformly at random. The largest storage limit $X$ is set to be $5n$. We also evaluate our method given different percentages of the largest storage limit, which is shown in Figure 2 (middle).
In the SGD algorithm, $x$ is initialized with the absolute value of a Gaussian random variable from $\mathcal{N}(5,3)$ to ensure it is non-negative.

\subsection{Settings of Stochastic Network Design}\label{app:exp_setting_network}
The task in equation 8 is equivalent to solving the following problem:
\begin{align}\label{eq:network_dual}
    \min_{\Delta g\geq 0}\max_{\mu\geq 0}~\mathbb{E}_{\theta\sim Pr(\theta)}[\overline{C}(g+\Delta g, \theta)] %=&\min_{g}\sum_{\theta\in\{0,1\}^m}\overline{C}(\theta)
    +\mu(\sum_{e\in E} c_e \Delta g_e- B).
    %, \mbox{ and } \Delta g_e \geq 0.
\end{align}
Because of the convexity of $\overline{C}(g+\Delta g, \theta)$ and strong duality, both problems have the same optimal solution.

We test our algorithm on a real-world problem, the so-called Flood Preparation problem for the emergency medical services (EMS) on road networks \cite{xiaojian2016}. The problem setup, including the graph structure and the definition of $Pr(\theta)$, are the same as that in \cite{xiaojian2016}. 
The original network is unweighted, hence we set the initial conductance value for each edge as 1. 
$c_e$ is initialized uniformly from the range $(0,10)$.
The largest budget size $B$ is $1000$. We evaluate our method varying the percentage allowed of the largest budget size, which is shown in Figure 3 (middle).
In the experiment, each entry of $\Delta g$ is initialized with the absolute value of a Gaussian random variable from $\mathcal{N}(0,1)$. Total number of SGD iterations is 2000, while not all algorithms can complete all 2000 iterations within the time limit of 10 hours. 
The experimental results reported in the plots are based on the best solutions found by each algorithm within the time limit. 
Learning rate $t$ is $1$ at first and divided by $10$ after 20 iterations, further by $10$ after 100 iterations. Parameters in XOR-Sampling are set to be the same as in the inventory management problem. 
%A wall-time of 10 hours is also enforced.
%for all algorithms. 

%$Pr(\theta)$ is the same as that in \cite{xiaojian2016} and other parameters and settings are the same as that in inventory management.

\end{document}

% --- supplement: supplement.tex ---

% If your paper is accepted and the title of your paper is very long,
% the style will print as headings an error message. Use the following
% command to supply a shorter title of your paper so that it can be
% used as headings.
%
%\runningtitle{I use this title instead because the last one was very long}

% If your paper is accepted and the number of authors is large, the
% style will print as headings an error message. Use the following
% command to supply a shorter version of the authors names so that
% they can be used as headings (for example, use only the surnames)
%
%\runningauthor{Surname 1, Surname 2, Surname 3, ...., Surname n}

% Supplementary material: To improve readability, you must use a single-column format for the supplementary material.
\onecolumn
\aistatstitle{Provable Constrained Stochastic Convex Optimization \\with XOR-Projected Gradient Descent}

\appendix
\section*{Appendix}

\section{XOR-Sampling for the Weighted Case}\label{app:weighted_case}
The text here provides a synopsis for the approach in \cite{ermon2013embed}. We still encourage the readers to read the original text for a better explanation. 
%
Let $w(\theta), p(\theta)$ and $Z$ as defined before, the high-level idea of XOR-Sampling is to first dicretize $w(\theta)$ to $w'(\theta)$ as in Definition \ref{def:discretize}, followed by embedding the weighted $w'(\theta)$  to  the unweighted space $\Delta_w$. Finally, XOR-sampling uses counting based on hashing and randomization to  sample uniformly from $\Delta_w$. %, by querying NP-oracles.

\begin{Def}\label{def:discretize}
Assume $w(\theta)$ has both upper and lower bound, namely, $M=\max_{\theta} w(\theta)$ and $m=\min_{\theta} w(\theta)$. Let $b\geq 1, \epsilon>0, r=2^b/(2^b-1)$ and $l=\lceil \log_r(2^n/\epsilon)\rceil$. Partition the configurations into the following weight based disjoint buckets: $\mathcal{B}_i=\{\theta|w(\theta)\in(\frac{M}{r^{i+1}}, \frac{M}{r^{i}}]\}, i=0,\ldots, l-1$ and $\mathcal{B}_l=\{\theta|w(\theta)\in(0,\frac{M}{r^l}]\}$. The discretized weight function $w': \{0,1\}^n\rightarrow\mathbb{R}^+$ is defined as follows: $w'(\theta)=\frac{M}{r^{i+1}}$ if $\theta\in\mathcal{B}_i, i=0,\ldots, l-1$ and $w'(\theta)=0$ if $\theta\in\mathcal{B}_l$. This leads to the corresponding
discretized probability distribution $p'(\theta)=w'(\theta)/Z'$ where $Z'$ is the normalization constant of $w'(\theta)$.
\end{Def}

For the weighted case, the goal of XOR-sampling is to guarantee that the probability of sampling one $\theta$ is proportional to the unnormalized density (up to a multiplicative constant). By Definition \ref{def:discretize}, we obtain a distribution $p'(x)$ which satisfying $\frac{1}{\rho}p(x)\leq p'(x)\leq \rho p(x)$ where $\rho=\frac{r^2}{1-\epsilon}$. Then, XOR-sampling implements a horizontal slice technique to transform a weighted problem into an unweighted one. For the easiness of illustration, we denote $M'=\max_{\theta}w'(\theta)$ and $m'$ as the smallest non-zero value of $w'(\theta)$. Then consider the simple case where $b=1$ and $r=2$, where we have $M'=2^{l-1} m'$. Let $\delta=(\delta_0, \ldots, \delta_{l-2})^T\in\{0,1\}^{l-1}$ be a binary vector of length $l-1$, XOR-sampling samples $(\theta, \delta)$ uniformly at random from the following set $\Delta_w$ using the unweighted version of sampling based on hashing and randomization:
\begin{align}
    \Delta_w=\{(\theta,\delta): w'(\theta)\leq 2^{i+1}m'\Rightarrow\delta_i=0\}.
\end{align}
If we sample $(\theta,\delta)$ uniformly at random from $\Delta_w$ and then only return $\theta$, it can be proved that the probability of sampling $\theta$ from $w'(\theta)$ is proportional to $m'2^{i-1}$ when $w(\theta)$ is sandwiched between $m'2^{i-1}$ and $m'2^i$. Therefore, this technique leads to the constant approximation guarantee of XOR-Sampling.
%
The precise statement of the guarantee is in Theorem 2. For general case of $b$ and $r$, please refer to \cite{ermon2013embed}.
%, the guarantee of constant bound on the expectation over $Pr(\theta)$, which is shown in . 

%Compared to MCMC samplers, this property allows XOR-Sampling to obtain an unbiased estimation without drawing a large number of samples.

\noindent\textbf{Setting $\epsilon\eta_{\phi}$ to Zero}~~~~
In Definition \ref{def:discretize} we can make $b$ larger and $\epsilon$ smaller enough, then there will be a possibly large but finite value of $l$ such that $\frac{M}{r^l}$ is smaller than $m$ , which leads $\mathcal{B}_l$ to be empty and $\epsilon\eta_{\phi}$ to be  zero.

\section{Proofs}\label{app:proofs}
\subsection{Proof of Lemma 1} %\ref{lm:XOR_bound}}
We define two functions $g_k^+=\max\{g_k,\mathbf{0}\}$ and $g_k^-=\min\{g_k,\mathbf{0}\}$ where $\mathbf{0}$ is a vector of all 0 which has the same dimension as $g_k$. We have $g_k=g_k^+ + g_k^-$. We define both $\nabla f(x_k)^+$ and $\nabla f(x_k)^-$ in the similar way. Then Lemma 1 gives the new bounds of two terms assuming the constant bound on the gradient, which are essential to the proof of convergence rate. The proof of Lemma 1 is as follows:

% \subsection{Proof of Lemma \ref{lm:XOR_bound}}\label{app:proof_lm_XOR}
\begin{proof} (Lemma 1)
Since we have the constant bound that
\begin{align}
    \frac{1}{c}\nabla f(x_k)^+ & \leq \mathbb{E}[g_k^+]\leq c\nabla f(x_k)^+.\label{eq:pos1}\\
    c\nabla f(x_k)^- & \leq \mathbb{E}[g_k^-]\leq \frac{1}{c}\nabla f(x_k)^-.\label{eq:neg1}
\end{align}
and because of $g_k^+\geq \mathbf{0}$ and $g_k^-\leq \mathbf{0}$ we can obtain
\begin{align*}
     \frac{1}{c}||\mathbb{E}[g_k^+]||_2^2&=\frac{1}{c}\langle \mathbb{E}[g_k^+], \mathbb{E}[g_k^+]\rangle \leq \langle\nabla f(x_k)^+, \mathbb{E}[g_k^+]\rangle \\
     &\leq c\langle \mathbb{E}[g_k^+], \mathbb{E}[g_k^+]\rangle = c||\mathbb{E}[g_k^+]||_2^2.\\
     \frac{1}{c}||\mathbb{E}[g_k^-]||_2^2&=\frac{1}{c}\langle \mathbb{E}[g_k^-], \mathbb{E}[g_k^-]\rangle \leq \langle\nabla f(x_k)^-, \mathbb{E}[g_k^-]\rangle\\
     &\leq c\langle \mathbb{E}[g_k^-], \mathbb{E}[g_k^-]\rangle = c||\mathbb{E}[g_k^-]||_2^2.
\end{align*}
which exactly means
\begin{align*}
     \frac{1}{c} ||\mathbb{E}[g_k]||_2^2 &\leq \langle\nabla f(x_k), \mathbb{E}[g_k]\rangle  \leq c||\mathbb{E}[g_k]||_2^2.
\end{align*}
To prove the second inequality, we need to take advantage of the convexity of $f$. Denote $[x_k-x^*]^+=\max\{x_k-x^*,\textbf{0}\}$ and $[x_k-x^*]^-=\min\{x_k-x^*,\textbf{0}\}$, we know $x_k-x^*=[x_k-x^*]^+ + [x_k-x^*]^-$. In addition, because $f$ is convex, the index set of non-zero entries of $[x_k-x^*]^+$ and $\nabla f(x_k)^+$ is the same. The index set of non-zero entries of $[x_k-x^*]^-$ and $\nabla f(x_k)^-$ is also the same. %\xyx{also need the sign of $g_k$ and $x_k-x^*$ to be the same to derive the next two equations, right? Can you add those?} 
%
In addition, because of Equation~\ref{eq:pos1} and \ref{eq:neg1}, the index set of non-zero entries of $\mathbb{E}[g_k^+]$ ($\mathbb{E}[g_k^-]$) is the same with 
$\nabla f(x_k)^+$ ($\nabla f(x_k)^-$). 
Combining these facts with Equations \ref{eq:pos1} and \ref{eq:neg1}, we have
\begin{align*}
    \frac{1}{c}\langle\mathbb{E}[g_k^+],[x_k-x^*]^+\rangle  &\leq  \langle\nabla f(x_k)^+,[x_k-x^*]^+\rangle\\
    &\leq c\langle\mathbb{E}[g_k^+],[x_k-x^*]^+\rangle.\\
    \frac{1}{c}\langle\mathbb{E}[g_k^-],[x_k -x^*]^-\rangle  &\leq  \langle\nabla f(x_k)^-,[x_k-x^*]^-\rangle \\
    &\leq c\langle\mathbb{E}[g_k^-],[x_k-x^*]^-\rangle.
\end{align*}
Combining these two equations, we have 
\begin{align*}
    \frac{1}{c}\langle\mathbb{E}[g_k],x_k-x^*\rangle  &\leq  \langle\nabla f(x_k),x_k-x^*\rangle  \leq c\langle\mathbb{E}[g_k],x_k-x^*\rangle.
\end{align*}
This completes the proof.
\end{proof}

% \subsection{Proof of Theorem \ref{Th:XOR_SGD_bound}}\label{app:proof_XOR_SGD_bound}
% To proof Theorem \ref{Th:XOR_SGD_bound}, we first state two lemmas. 
% %
% Lemma \ref{lm:SGD_bound} is the linear convergence rate proof of a classical SGD algorithm. 
% %
% This proof was from \cite{raghuNote} and was listed 
% here to be compared with the proof of Theorem \ref{Th:XOR_SGD_bound}. 
% %
% Lemma \ref{lm:XOR_bound} gives the new bounds of two terms assuming the constant bound on the gradient, which are essential to the proof of convergence rate. Notice that inner product of two vectors $x$ and $y$ is represented either as $x^T y$ or $\langle x,y\rangle$ below. For two vectors $f$ and $g$, $f \leq g$ means each entry of $f$ is not greater than the corresponding entry of $g$. For a random vector $x$, we define $\mathbb{E}[x]$ as an element-wise expectation. In other words, $\mathbb{E}[x]$ returns a vector, each entry of which is the expectation of the corresponding entry of $x$.  $Var(x)$ is defined as $\mathbb{E}[||x||_2^2]-||\mathbb{E}[x]||_2^2$, which maps a random vector to a real number. Here $||\cdot||_2^2$ is the square of $l_2$ norm.

% \begin{Lm}\label{lm:SGD_bound}\cite{raghuNote}~~
% Let $f:\mathbb{R}^d\rightarrow \mathbb{R}$ be a $L$-smooth convex function and $x^*=\text{argmin}_x f(x)$. In each iteration $k$ of SGD, $x_{k+1}=x_{k}-tg_k$ where $g_k$ is an unbiased estimation of gradient $\nabla f(x_k)$ and $Var(g_k)\leq \sigma^2$. Then for any $K>1$, step size $t\leq \frac{2}{L}$ and $\overline{x_K}=\frac{1}{K}\sum_{k=1}^K x_k$, we have 
% \begin{align}
%     \mathbb{E}[f(\overline{x_K})]-f(x^*)\leq \frac{||x_0-x^*||_2^2}{2tK}+t\sigma^2.
% \end{align}
% % where $\overline{x_k}=\frac{1}{k}\sum_{i=1}^k x_k$.
% \end{Lm}
% % \subsection{Proof of Lemma \ref{lm:SGD_bound}}\label{app:proof_lm_SGD}
% \begin{proof} (Lemma \ref{lm:SGD_bound})
% By L-smooth of $f$, we have for the $k$-th iteration,
% \begin{align*}
%     &f(x_{k+1})\\& \leq f(x_k)+\langle\nabla f(x_k),x_{k+1}-x_k\rangle +\frac{L}{2}||x_{k+1}-x_k||_2^2,\\
%       & =f(x_k)-t\langle\nabla f(x_k), g_k\rangle+\frac{Lt^2}{2}||g_k||^2.
% \end{align*}
% Because of $\mathbb{E}[g_k]^2= \mathbb{E}[||g_k||_2^2]-Var(g_k)$, by taking expectation on both sides w.r.t $g_k$ we get
% \begin{align*}
%     &\mathbb{E}[f(x_{k+1})]\\
%     &= f(x_k)-t\mathbb{E}[g_k]^2+\frac{Lt^2}{2}\mathbb{E}[||g_k||_2^2],\\
%     &= f(x_k)-t(\mathbb{E}[||g_k||_2^2]-Var(g_k))+\frac{Lt^2}{2}\mathbb{E}[||g_k||_2^2],\\
%     &\leq f(x_k)-t(1-\frac{Lt}{2})\mathbb{E}[||g_k||_2^2]+t\sigma^2,\\
%     &\leq f(x_k)-\frac{t}{2}\mathbb{E}[||g_k||_2^2]+t\sigma^2.
% \end{align*}
% where the last inequality follows as $Lt\leq 2$. Because $f$ is convex, we get
% \begin{align*}
%     &\mathbb{E}[f(x_{k+1})]\\
%     &\leq f(x^*)+\langle\nabla f(x_k),x_k-x^*\rangle-\frac{t}{2}\mathbb{E}[||g_k||_2^2]+t\sigma^2,\\
%     &= f(x^*)+\langle\mathbb{E}[g_k],x_k-x^*\rangle-\frac{t}{2}\mathbb{E}[||g_k||_2^2]+t\sigma^2,\\
%     &= f(x^*)+\mathbb{E}[\langle g_k,x_{k}-x^* \rangle-\frac{t}{2}||g_k||_2^2]+t\sigma^2.
% \end{align*}
% we now repeat the calculations by completing the square for the middle two terms to get
% \begin{align*}
%     &\mathbb{E}[f(x_{k+1})]\\
%     &\leq f(x^*)+\mathbb{E}[\frac{1}{2t}(||x_k-x^*||_2^2-||x_k-x^*-tg_k||_2^2)]+c\sigma^2,\\
%     &=f(x^*)+\mathbb{E}[\frac{1}{2t}(||x_k-x^*||_2^2-||x_{k+1}-x^*||_2^2)]+t\sigma^2.
% \end{align*}
% Summing the above equations for $k=0,\ldots,K-1$, we get
% \begin{align*}
%     &\sum_{k=0}^{K-1}\mathbb{E}[f(x_{k+1})-f(x^*)]\\
%     &\leq\frac{1}{2t}(||x_0-x^*||_2^2-\mathbb{E}[||x_K-x^*||_2^2])+kt\sigma^2\\
%     &\leq \frac{||x_0-x^*||_2^2}{2t}+Kt\sigma^2.
% \end{align*}
% Finally, by Jensen's inequality, $Kf(\overline{x_K})\leq \sum_{k=1}^Kf(x_k)$, thus,
% \begin{align*}
%     \sum_{k=0}^{K-1}\mathbb{E}[f(x_{k+1})-f(x^*)]&=\mathbb{E}[\sum_{k=1}^Kf(x_k)]-Kf(x^*)\\
%     &\geq K\mathbb{E}[f(\overline{x_K})]-Kf(x^*).
% \end{align*}
% Combining the above equations we get
% $$\mathbb{E}[f(\overline{x_K})]\leq f(x^*)+\frac{||x_0-x^*||_2^2}{2tK}+t\sigma^2.$$
% This completes the proof. 
% \end{proof}

% \subsection{Proof of Theorem \ref{Th:XOR_SGD_bound}}\label{app:proof_XOR_SGD_bound}

\subsection{Proof of Theorem 4}\label{app:proof_main}
% \subsection{Proof of Theorem \ref{Th:main}}\label{app:proof_main}
\begin{Th}\label{Th:main}
(Main) Let $b,\epsilon,l,\delta,P,\alpha,\rho,\kappa$ and $\mathcal{B}_l$ be as in section \ref{app:weighted_case} in appendix, function $f(x,\theta): \mathbb{R}^d\times\{0,1\}^n\rightarrow \mathbb{R}$ be a $L$-smooth convex function w.r.t. $x$. Denote $OPT=\min_x \mathbb{E}_{\theta\sim Pr(\theta)}f(x,\theta)$ as the global optimum. 
%, and $x_{k+1}=x_{k}-t\overline{g_k}$ where $\overline{g_k}$ is the mean of samples $g_k^i=\nabla f(x_k,\theta_i)$. 
Let $\sigma^2=\max_x\{Var(\nabla_x f(x,\theta))\}$ and $\varepsilon^2=\max_x\{||\mathbb{E}[\nabla_x f(x, \theta)]||_2^2\}$. 
%
For any $1\leq\rho\kappa\leq\sqrt{2}$, step size $t\leq \frac{2-\rho^2\kappa^2}{L\rho\kappa}$ and sample size $N\geq1$, $\overline{x_K}$ is the output of XOR-SGD and $\mbox{obj} = \mathbb{E}_{\theta}[f(\overline{x_K}, \theta)]$ is the objective function value at $\overline{x_K}$. We have: 
\begin{align}\label{eq:main}
    \mathbb{E}_{\overline{x_K}}[\mbox{obj}] - OPT
     &\leq\frac{\rho\kappa||x_0-x^*||_2^2}{2tK}+\frac{t(\sigma^2+\varepsilon^2)}{N}.
    %+\frac{t\sigma^2}{N}+\frac{t\varepsilon^2}{N}\Big{(}1-\frac{1}{\rho^3\kappa^3}\Big{)}
\end{align}
\end{Th}

\begin{proof}(Theorem 4)
Since we use $N$ samples at each iteration, we have $\overline{g_k}=\frac{1}{N}\sum_{i=1}^Ng_k^i$ and $\mathbb{E}[\overline{g_k}]=\mathbb{E}[g_k^i]$.
In each iteration $k$ we can adjust the parameters in XOR-Sampling to make the tail $\epsilon\eta_{\phi}$ zero, then for each sample $g_k^i$ we can obtain from Theorem 2 that
\begin{align}
    \frac{1}{\rho\kappa} \mathbb{E}_\theta [\nabla f(x_k, \theta)]^+ &\leq \mathbb{E}[g_k^{i+}]\leq \rho\kappa\mathbb{E}_\theta [\nabla f(x_k, \theta)]^+.\label{eq:pos2}\\
    \rho\kappa \mathbb{E}_\theta [\nabla f(x_k, \theta)]^- &\leq \mathbb{E}[g_k^{i-}]\leq  \frac{1}{\rho\kappa}\mathbb{E}_\theta [\nabla f(x_k, \theta)]^-.\label{eq:neg2}
\end{align}
The variance of each sample $g_k^i$ can also be bounded by
\begin{align*}
    &Var(g_k^i) \\
    &= \mathbb{E}_{\theta' \sim p'(\theta')} [||\nabla f(x_k, \theta')||_2^2] - ||\mathbb{E}_{\theta' \sim p'(\theta')} [\nabla f(x_k, \theta')]||_2^2,\\
    &\leq \rho\kappa\mathbb{E}_{\theta \sim p(\theta)} [||\nabla f(x_k, \theta)||_2^2],\\
    &= \rho\kappa (Var(\nabla f(x_k,\theta)) + ||\mathbb{E}_{\theta \sim p(\theta)} [\nabla f(x_k, \theta)]||_2^2),\\
    &\leq \rho\kappa(\sigma^2 +\varepsilon^2).
\end{align*}
Denote $\overline{g_k}^+=\max\{\overline{g_k},\textbf{0}\}$ and $\overline{g_k}^-=\min\{\overline{g_k},\textbf{0}\}$. 
%
Clearly, $g_k^{i+} \geq 0$ and $g_k^{i-} \leq 0$. Moreover,
for a given dimension, either $g_k^{i+}=0$ for that dimension or $g_k^{i-}=0$. 
%
Evaluating $\overline{g_k}$ dimension by dimension, we can see that
$\overline{g_k}^+=\frac{1}{N}\sum_{i=1}^N g_k^{i+}$ and $\overline{g_k}^-=\frac{1}{N}\sum_{i=1}^N g_k^{i-}$.
Combined with Equation~\ref{eq:pos2} and \ref{eq:neg2}, we know 
\begin{align*}
    \frac{1}{\rho\kappa} \mathbb{E}_\theta [\nabla f(x_k, \theta)]^+ &\leq \mathbb{E}[\overline{g_k}^+]\leq \rho\kappa\mathbb{E}_\theta [\nabla f(x_k, \theta)]^+.\\
    \rho\kappa \mathbb{E}_\theta [\nabla f(x_k, \theta)]^- &\leq \mathbb{E}[\overline{g_k}^-]\leq  \frac{1}{\rho\kappa}\mathbb{E}_\theta [\nabla f(x_k, \theta)]^-.
\end{align*}

Because $\mathbb{E}[\overline{g_k}]=\mathbb{E}[g_k^i]$, we also have
\begin{align*}
    Var(\overline{g_k})=\frac{1}{N^2}Var(\sum_{i=1}^Ng_k^i)=\frac{Var(g_k^i)}{N}.
\end{align*}
Then the variance of $\overline{g_k}$ can be bounded as
\begin{align*}
    Var(\overline{g_k})&\leq \frac{\rho\kappa(\sigma^2+\varepsilon^2)}{N}.
\end{align*}

Therefore, we can then apply Theorem 3 to get the result in equation 5.
\begin{align*}
     &\mathbb{E}_{\overline{x_K}}[\mathbb{E}_{\theta}[f(\overline{x_K}, \theta)]]  -\mathbb{E}_{\theta}[f(x^*,\theta)]\\
     &\leq \frac{\rho\kappa||x_0-x^*||_2^2}{2tK}+\frac{t\max_k\{Var(\overline{g_k})\}}{\rho\kappa},\\
     &\leq\frac{\rho\kappa||x_0-x^*||_2^2}{2tK}+\frac{t(\sigma^2+\varepsilon^2)}{N}.
\end{align*}
which can also be written as
\begin{align}
    \mathbb{E}_{\overline{x_K}}[\mbox{obj}] - OPT
     &\leq\frac{\rho\kappa||x_0-x^*||_2^2}{2tK}+\frac{t(\sigma^2+\varepsilon^2)}{N}.
\end{align}
This completes the proof.
\end{proof}

\section{Experiments}\label{app_exp}
We evaluate our XOR-SGD algorithm on the inventory management \cite{ziukov2016literature,shapiro2007tutorial} and the network design problems \cite{sheldon2012maximizing,WuXSG17XORSampling,xiaojian2016}.
%
For each setting of both applications, to produce a sample, Gibbs sampling first takes 100 steps to burn in, and then draws samples every 30 steps. We fix the iteration step of both BP and BPChain as $20$, which is enough for BP to converge.
%
We allow SGD with Gibbs sampling, BP and BPChain to draw more samples than XOR-SGD for a fair comparison. 
%
All experiments were conducted using single core architectures on Intel Xeon Gold 6126 2.60GHz machines with 96GB RAM and a wall-time limit of $10$ hours. 
%
For both applications, we use MRF as probabilistic models for $Pr(\theta)$, which can be seen in the next section.
%
For a fair comparison, once a solution $x$ is generated by 
either algorithm, we use an exact weighted counter ACE \cite{barton2016ace} to evaluate $\mathbb{E}_{\theta\sim Pr(\theta)} f(x, \theta)$ exactly. All objective values reported here are from ACE.

\subsection{Settings of Stochastic Inventory Management}\label{app:exp_setting_inventory}
%Because of the storage limit $X$ for different storage requirement $w_i$ of the $i$-th material, 
Taking into account of the storage constraint, the original problem is equivalent to the following problem:
\begin{align}\label{eq:inventory_dual}
    \min_{x\geq 0}\max_{\mu\geq 0}\mathbb{E}_{d\sim Pr(d)}[G(x,d)]+\mu(w^{T}x-X).
\end{align}

For inventory management problem, we assume each $d_i$ can take two different values, one corresponding to the high demand one corresponding to the low demand. Then, we introduce a new vector $\theta$ where $\theta_i=1$ means $d_i$ is the high value while $\theta_i=0$ otherwise. 
%We  model $Pr(\theta)$ using an MRF. %by equation \ref{eq:mrf}, rather than model $Pr(d)$.
%
%Because $G(x,d)$ is convex and the $w^Tx-X$ is linear, the optimal solution of the problem is the same. 
%
In the experiment we range $n$ from 10 to 100 increased by a step size of 10 and draw 10 instances for each setting. Under each setting, we draw every $c_i$ uniformly from $(0,5]$, $h_i$ uniformly from $(0,10]$, sample $s_i$ uniformly drawn from $(0,10]$ and let $b_i=c_i+s_i$. 
%
The two values of each $d_i$ are also uniformly drawn from $(0,10]$. 
%and 
%We model To model $Pr(d)$, we replace $Pr(d)$ with $Pr(\theta)$ by introducing a binary random vector $\theta$, where $\theta_i=1$ means $d_i$ choose the larger value and $\theta_i=0$ otherwise. 
We model $Pr(\theta)$ as a MRF with several cliques. The variables in each clique are highly correlated with each other. 
%
For a problem with $n$ products, we draw the number of cliques uniformly from $[n,2n]$. The domain size of each clique $\phi_{\alpha}$ is chosen from the range of $[1,6]$ at random. The potential function of a clique  involving $l$ variables is in the form of a table of size $2^l$. 
%
The $i$-th entry of this table, denoted as $v_i$, is modeled as $v_i=v_{i1}+v_{i2}v_{i3}$, where $v_{i1}$ is uniformly drawn from $(0,1)$, $v_3$ uniformly from $(10,1000)$ and binary variable $v_{i2}$ uniformly randomly drawn from $\{0,1\}$. 
%
Each storage requirement $w_i$ is drawn from $(0,10]$ uniformly at random. The largest storage limit $X$ is set to be $5n$. We also evaluate our method given different percentages of the largest storage limit, which is shown in Figure 2 (middle).
%
In the SGD algorithm, $x$ is initialized with the absolute value of a Gaussian random variable from $\mathcal{N}(5,3)$ to ensure it is non-negative.

\subsection{Settings of Stochastic Network Design}\label{app:exp_setting_network}
The task in equation 8 is equivalent to solving the following problem:
\begin{align}\label{eq:network_dual}
    \min_{\Delta g\geq 0}\max_{\mu\geq 0}~\mathbb{E}_{\theta\sim Pr(\theta)}[\overline{C}(g+\Delta g, \theta)] %=&\min_{g}\sum_{\theta\in\{0,1\}^m}\overline{C}(\theta)
    +\mu(\sum_{e\in E} c_e \Delta g_e- B).
    %, \mbox{ and } \Delta g_e \geq 0.
\end{align}
Because of the convexity of $\overline{C}(g+\Delta g, \theta)$ and strong duality, both problems have the same optimal solution.

We test our algorithm on a real-world problem, the so-called Flood Preparation problem for the emergency medical services (EMS) on road networks \cite{xiaojian2016}. The problem setup, including the graph structure and the definition of $Pr(\theta)$, are the same as that in \cite{xiaojian2016}. 
%
The original network is unweighted, hence we set the initial conductance value for each edge as 1. 
%
$c_e$ is initialized uniformly from the range $(0,10)$.
%
The largest budget size $B$ is $1000$. We evaluate our method varying the percentage allowed of the largest budget size, which is shown in Figure 3 (middle).
%
In the experiment, each entry of $\Delta g$ is initialized with the absolute value of a Gaussian random variable from $\mathcal{N}(0,1)$. Total number of SGD iterations is 2000, while not all algorithms can complete all 2000 iterations within the time limit of 10 hours. 
%
The experimental results reported in the plots are based on the best solutions found by each algorithm within the time limit. 
%
Learning rate $t$ is $1$ at first and divided by $10$ after 20 iterations, further by $10$ after 100 iterations. Parameters in XOR-Sampling are set to be the same as in the inventory management problem. 
%A wall-time of 10 hours is also enforced.
%for all algorithms. 

%$Pr(\theta)$ is the same as that in \cite{xiaojian2016} and other parameters and settings are the same as that in inventory management.

\clearpage
\bibliography{fan}